 \newcommand*{\double}[2][.1ex]{%
  \mathrel{\vcenter{\offinterlineskip%
  \hbox{$#2$}\vskip#1\hbox{$#2$}}}}
\newcommand{\txtand}{{\mathop{\text{~and~}}}}
\newcommand{\Ccal}{{\mathcal{C}}}
\newcommand{\Ical}{{\mathcal{I}}}
\newcommand{\Ocal}{{\mathcal{O}}}
\newcommand{\Pbf}{{\mathbf{P}}}
\newcommand{\Wbf}{{\mathbf{W}}}
\newcommand{\restrict}{|}
\newcommand{\contract}{.}
\newcommand{\BS}{\backslash}
\newcommand{\sgn}{\mathrm{sgn}}
\newcommand{\rk}{\mathrm{rk}}
\newcommand{\Z}{\mathbb{Z}}
\newcommand{\N}{\mathbb{N}}
\newcommand{\disunion}{\mathbin{\dot{\cup}}}
\renewcommand{\phi}{\varphi}
\newcommand{\R}{\mathbb{R}}
\newcommand{\maparrow}{\longrightarrow}
\newcommand{\BSET}[1]{{\BS\left\{ #1 \right\}}}
\newcommand{\SET}[1]{{\left\{ #1 \right\}}}
\newcommand{\dSET}[1]{{\left\{ #1 \right\}_{\neq}}}
\newcommand{\COMMENT}[1]{}
\newcommand{\XXXCUTXXX}[1]{}
\newcommand{\ROMANENUM}{\renewcommand{\theenumi}{(\roman{enumi})}\renewcommand{\labelenumi}{\theenumi}}
\newcommand{\routesto}{\double{\rightarrow}}
\newcommand{\deftext}[2][]{\emph{#2}}
\newcommand{\deftextX}[2][]{\emph{#2}}
\newcommand{\PRFR}[1]{\ignorespaces}
\newcommand{\bm}{\ignorespaces}
\def\titlerunning#1{\gdef\titrun{#1}}
\def\author#1{\gdef\autrun{\def\and{\unskip, }#1}\gdef\@author{#1}}
\def\address#1{{\def\and{\\\hspace*{18pt}}\renewcommand{\thefootnote}{}%
\footnote {#1}}%
\markboth{\autrun}{\titrun}}
\def\email#1{e-mail: #1}
\def\subjclass#1{{\renewcommand{\thefootnote}{}%
\footnote{\emph{Mathematics Subject Classification (2010):} #1}}}
\def\keywords#1{\par\medskip
\noindent\textbf{Keywords.} #1}
\newtheorem{theorem}{Theorem}[section]
\newtheorem{corollary}[theorem]{Corollary}
\newtheorem{lemma}[theorem]{Lemma}
\theoremstyle{definition}
\newtheorem{definition}[theorem]{Definition}
\newtheorem{example}[theorem]{Example}
\renewenvironment{cases}[1][l]{\matrix@check\cases\env@cases{#1}}{\endarray\right.}
\def\env@cases#1{%
  \let\@ifnextchar\new@ifnextchar
  \left\lbrace\def\arraystretch{1.2}%
  \array{@{}#1@{\quad}l@{}}}
\begin{document}

% TITLE PAGE

\titlerunning{Heavy Arc Orientations of Gammoids}

\title{Heavy Arc Orientations of Gammoids}

\author{Immanuel Albrecht}

\date{\today}

\maketitle

\address{I.~Albrecht: %Hochschulstr.~36, D-01069~Dresden, Germany; \email{Immanuel.Albrecht@fernuni-hagen.de}
FernUniversität in Hagen, Fakultät für Mathematik und Informatik, Lehrgebiet für Diskrete Mathematik und Optimierung, D-58084~Hagen, Germany;
\email{Immanuel.Albrecht@fernuni-hagen.de}
}

\subjclass{ 52C40, 05B35, 05C20}

% ABSTRACT

\begin{abstract}
 In this work, we introduce a \emph{purely combinatorial} way to obtain realizable orientations of a gammoid from a
 total order on the arc set of the digraph representing it, without first obtaining a matrix representing the gammoid over the reals.

%% Keywords are optional
\keywords{ gammoids, oriented matroids, cascade matroids, directed graphs}
\end{abstract}

This work is structured into two parts. First we develop a combinatorial method of obtaining an orientation of a \emph{cascade matroid} ---
i.e. of a gammoid that may be represented using an acyclic digraph. Then we introduce the method of \emph{lifting cycles} in order to
deal with gammoid representations that involve cycles.

% CONTENT

\section{Preliminaries}

In this work, we consider \emph{matroids} to be pairs $M=(E,\Ical)$ where $E$ is a finite set 
and $\Ical$ is a system of
independent subsets of $E$ subject to the usual axioms (\cite{Ox11}, Sec.~1.1).
%The family of bases of $M$ shall be denoted by $\Bcal(M)$,
%the family of flats of $M$ shall be denoted by $\Fcal(M)$.
The family of circuits of $M$ shall be denoted by $\Ccal(M)$.
If $M=(E,\Ical)$ is a matroid and $X\subseteq E$, then the restriction of $M$ to $X$
shall be denoted by $M\restrict X$ (\cite{Ox11}, Sec.~1.3),
and the contraction of $M$ to $X$ shall be denoted by $M\contract X$
(\cite{Ox11}, Sec.~3.1).
The \emph{dual matroid} of $M$ shall be denoted by $M^\ast$.

A \emph{signed} subset of $E$ shall be a map $X\colon E\mapsto \SET{-1,0,+1}$,
furthermore the \emph{positive} elements of $X$ shall be
\( X_+ = \SET{x\in E\mid X(x) = 1}, \)\label{n:xplus}
  the \deftext[negative elements of a signed subset]{negative} elements of $\bm X$ shall be
  \( X_- = \SET{x\in E\mid X(x)=-1},\)\label{n:xminus}
  the \deftext[support of a signed subset]{support} of $\bm X$ shall be
  \( X_\pm = \SET{x\in E\mid X(x)\not= 0},\)\label{n:xpm}
  and the \deftext[zero-set of a signed subset]{zero-set} of $\bm X$ shall be 
   $X_0 = E\BS X_\pm$.\label{n:xzero} The \deftext[negation of a signed subset]{negation} of $\bm X$
  shall be the signed subset \(-X\)\label{n:minusx} where $-X\colon E\maparrow \SET{-1,0,1},$ $e\mapsto -X(e)$.
 \emph{Oriented matroids} are
considered triples $\Ocal = (E,\Ccal,\Ccal^\ast)$ where $E$ is a finite set, $\Ccal$ is a family of signed circuits and $\Ccal^\ast$
is a family of signed cocircuits subject to the axioms of oriented matroids (\cite{BlVSWZ99}, Ch.~3). Every oriented matroid $\Ocal$
has a uniquely determined underlying matroid defined on the ground set $E$, which we shall denote by $M(\Ocal)$.
A matroid $M$ shall be \emph{orientable}, if there is an oriented matroid $\Ocal$ such that $M=M(\Ocal)$.

The notion of a \emph{digraph} shall be synonymous with what is described more 
precisely as \emph{finite simple directed graph} that may have some loops, i.e. a digraph is 
a pair $D=(V,A)$ where $V$ is a finite
set and $A\subseteq V\times V$ -- thus $\left| A \right| < \infty$. 
%Every digraph $D=(V,A)$ has a unique \emph{opposite digraph} $D^\opp = (V,A^\opp)$ where
%$(u,v)\in A^\opp$ if and only if $(v,u)\in A$.
All standard notions related to digraphs in this work are in
accordance with the definitions found in \cite{BJG09}. A \emph{walk} in $D=(V,A)$ is a non-empty
sequence $w = w_1 w_2 \ldots w_n$ of vertices $w_i\in V$ such that 
for each $1 \leq i < n$, $(w_i,w_{i+1})\in A$. By convention, we shall denote $w_n$ by $w_{-1}$.
Furthermore, the set of vertices traversed by a walk $w$ shall be denoted by $\left| w \right| = \SET{w_1,w_2,\ldots,w_n}$
and the set of all walks in $D$ shall be denoted by $\Wbf(D)$. Furthermore, the set of arcs traversed by $w$ shall be
denoted by $\left| w \right|_A = \SET{(w_1,w_2),(w_2,w_3),\ldots,(w_{n-1},w_n)}$. If $u,v\in \Wbf(D)$ with $u_{-1} = v_{1}$, then
$u.v = u_1 u_2 \ldots u_n v_2 v_3 \ldots v_m$, i.e. $u.v$ is the walk that traverses the arcs of $u$ and then the arcs of $v$.
A \emph{path} in $D=(V,A)$ is a walk $p = p_1 p_2 \ldots p_n$ such that $p_i = p_j$ implies $i=j$. 
The set of all paths in $D$ shall be denoted by $\Pbf(D)$. For $S,T\subseteq V$, an \emph{$S$-$T$-separator} in $D$
is a set $X\subseteq V$ such that every path $p\in\Pbf(D)$ from $s\in S$ to $t\in T$ has $\left| p \right| \cap V\not=\emptyset$.
A \emph{cycle} is a walk $c_1 c_2 \ldots c_n$ such that $n > 1$,  $c_1 = c_n$, and 
 $c_1 c_2 \ldots c_{n-1}$ is a path. An \emph{$S$-$T$-connector} shall be a routing $R\colon S' \routesto T$ with $S'\subseteq S$.

\begin{definition}\PRFR{Jan 22nd}
  Let $D = (V,A)$ be a digraph, and $X,Y\subseteq V$. A \deftext{routing} from $X$ to $Y$ in $D$ is a family of paths $R\subseteq \Pbf(D)$ such that
  \begin{enumerate}\ROMANENUM
    \item for each $x\in X$ there is some $p\in R$ with $p_{1}=x$,
    \item for all $p\in R$ the end vertex $p_{-1}\in Y$, and
    \item for all $p,q\in R$, either $p=q$ or $\left|p\right|\cap \left|q\right| = \emptyset$.%, and
        %\item all $p\in R$ are simple.
  \end{enumerate}
  We shall write $R\colon X\routesto Y$ in $D$ as a shorthand for ``$R$ is a routing from $X$ to $Y$
    in $D$'', and if no confusion is possible, \label{n:routing}
    we just write $X\routesto Y$ instead of $R$ and $R\colon X\routesto Y$.
    A routing $R$ is called \deftext{linking} from $X$ to $Y$, if it is a routing onto $Y$, i.e. whenever $Y = \SET{p_{-1}\mid p\in R}$.
\end{definition}

\begin{definition}\label{def:gammoid}\PRFR{Jan 22nd}
    Let $D = (V,A)$ be a digraph, $E\subseteq V$,
    and $T\subseteq V$. 
    The \deftext[gammoid represented by DTE@gammoid represented by $(D,T,E)$]{gammoid represented by $\bm{(D,T,E)}$} is defined to be the matroid $\Gamma(D,T,E)=(E,\Ical)$\label{n:GTDE}
     where
    \[ \Ical = \SET{X\subseteq E \mid \text{there is a routing } X\routesto T \text{ in D}}. \]
    The elements of $T$ are usually called \deftextX{sinks} in this context, although they are not required to be actual sinks of the digraph $D$. To avoid confusion, 
    we shall call the elements of $T$ \deftext{targets} in this work. A matroid $M'=(E',\Ical')$ is called \deftextX{gammoid}, if there is a digraph $D'=(V',A')$ and a set $T'\subseteq V'$ such that $M' = \Gamma(D',T',E')$.
    A gammoid $M$ is called \deftext{strict}, if there is a representation $(D,T,E)$ of $M$ with $D=(V,A)$ where $V=E$.
\end{definition}

Whenever $t\in T\cap E$, we have $\Gamma(D,T,E)\contract E\BSET{t} = \Gamma(D,T\BSET{t},E\BSET{t})$.

\begin{definition}\label{def:rspivot}\PRFR{Jan 22nd}
  Let $D=(V,A)$ be a digraph, $s\in V$ be a vertex of $D$, and $r \in V$ be a vertex such that
  $(r,s)\in A$ is an arc of $D$. The \deftext[pivot of a digraph]{$\bm r$-$\bm s$-pivot of $\bm D$} 
  shall be the digraph $D_{r\leftarrow s} = (V,A_{r\leftarrow s})$\label{n:digraphpivot} where
  $$ A_{r\leftarrow s} = \SET{(u,v)\in A ~\middle|~ u\not= r} 
   \cup \SET{(s,x) ~\middle|~ (r,x)\in A,\,x\not=s}.$$
\end{definition}
For example, pivoting $(r,s)$ in \includegraphics[scale=.75,valign=c]{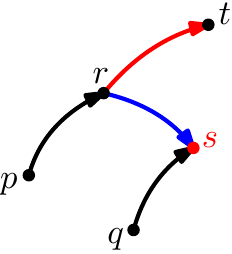} yields  \includegraphics[scale=.75,valign=c]{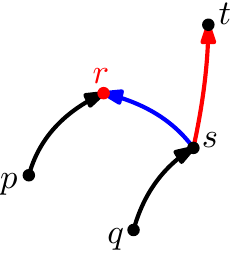}.

\begin{theorem}[\cite{M72}, The Fundamental Theorem (4.1.1)]\label{thm:fundamental}
  Let $D=(V,A)$ be a digraph, $T,E\subseteq V$, $s\in T$ which is sink in $D$, and $r\in V\BS T$ with $(r,s)\in A$.
  Then $\Gamma(D,T,E) = \Gamma(D_{r\leftarrow s}, T\BSET{s}\cup\SET{r}, E)$.
\end{theorem}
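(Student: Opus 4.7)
The plan is to prove the matroid equality by showing that a subset $X\subseteq E$ admits a routing $X\routesto T$ in $D$ if and only if $X$ admits a routing $X\routesto T\BSET{s}\cup\SET{r}$ in $D_{r\leftarrow s}$; by Definition~\ref{def:gammoid} this is precisely equality of the two gammoids. The strategy is to give an explicit, mostly local, transformation of routings in both directions, so that the forward direction establishes $\Ical(\Gamma(D,T,E))\subseteq \Ical(\Gamma(D_{r\leftarrow s},T\BSET{s}\cup\SET{r},E))$ and the reverse direction the other inclusion.

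The key structural observations are the following. Because $s$ is a sink in $D$, any path of a routing $R\colon X\routesto T$ that meets $s$ must terminate at $s$; by path--disjointness in $R$, at most one path of $R$ ends at $s$, and at most one path of $R$ contains $r$ (and such a path cannot end at $r$, as $r\not\in T$). Symmetrically, in $D_{r\leftarrow s}$ the vertex $r$ is a sink, since the pivot definition removes all arcs $(u,v)\in A$ with $u=r$; so in any routing $R'\colon X\routesto T\BSET{s}\cup\SET{r}$ the vertex $r$ can only appear as the last vertex of a path, and at most one path of $R'$ contains $r$ and at most one contains $s$.

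The construction of $R'$ from $R$ then proceeds by a case analysis on the at most two ``affected'' paths of $R$, leaving every other path untouched: a path $q\in R$ that avoids both $r$ and $s$ uses no arc with source $r$, so lies entirely in $D_{r\leftarrow s}$, and its endpoint lies in $T\BSET{s}\subseteq T\BSET{s}\cup\SET{r}$. For the affected paths, the relevant sub--cases are: \emph{(i)} some path $p\in R$ uses the pivot arc $(r,s)$ as its last arc, in which case truncating $p$ at $r$ yields a path ending at the new target $r$; \emph{(ii)} some path $p\in R$ passes through $r$ via an arc $(r,x)$ with $x\not= s$, in which case $p$ is split at $r$ into a path ending at $r$ together with a fresh path beginning at $s$ via the pivot--introduced arc $(s,x)\in A_{r\leftarrow s}$ and completing along the unchanged suffix of $p$; \emph{(iii)} a path ends at $s$ via a predecessor $y\not= r$, which is handled by combining with the path through $r$ and using the new arcs out of $s$. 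In each sub--case, one checks that the resulting family of walks is indeed a family of pairwise vertex--disjoint paths, that every arc it uses lies in $A_{r\leftarrow s}$, and that every endpoint lies in $T\BSET{s}\cup\SET{r}$. The reverse implication is handled by the same template with the roles of $r$ and $s$ interchanged, now using that $r$ is a sink in $D_{r\leftarrow s}$ and that each arc $(s,x)\in A_{r\leftarrow s}\setminus A$ may be ``uncontracted'' through the arc $(r,x)\in A$.

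The main obstacle I anticipate is sub--case \emph{(iii)}: producing an $R'$--path terminating at $r$ from an $R$--configuration in which the path ending at $s$ enters via $y\not= r$, so that the pivot arc $(r,s)$ is not used at all by $R$. Here one must carefully combine this path with the (at most one) path of $R$ that visits $r$ as an interior vertex and reroute through the newly created out--arcs of $s$, while maintaining vertex--disjointness with the rest of $R$ and ensuring the arcs used really are those of $A_{r\leftarrow s}$. Once this bookkeeping is done in both directions, the theorem follows immediately from the definition of a gammoid.
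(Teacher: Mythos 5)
Your template (local surgery on routings in both directions) is fine for your cases \emph{(i)} and \emph{(ii)} and for the whole reverse direction, but the sub-case \emph{(iii)} that you flag as ``the main obstacle'' and then defer as bookkeeping is a genuine gap, not a technicality. Your proposed repair presupposes that the routing $R$ has a path visiting $r$ and that $A_{r\leftarrow s}$ contains newly created arcs leaving $s$; neither needs to hold. Concretely, take $V=\SET{a,y,r,s}$, $A=\SET{(a,y),(y,s),(r,s)}$, $T=\SET{s}$, $E=\SET{a}$. All hypotheses are met ($s$ is a sink, $r\notin T$, $(r,s)\in A$), and $\SET{a}$ is independent in $\Gamma(D,T,E)$ via the path $ays$, which reaches $s$ through $y\neq r$ while no path of the routing meets $r$. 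But under Definition~\ref{def:rspivot} we get $A_{r\leftarrow s}=\SET{(a,y),(y,s)}$: since $s$ is the only out-neighbour of $r$, no arc $(s,x)$ is created, and no arc of $A_{r\leftarrow s}$ enters $r$ at all. Hence there is no routing $\SET{a}\routesto\SET{r}$ in $D_{r\leftarrow s}$ whatsoever, so in this configuration no surgery, local or global, can produce the routing your forward direction needs.

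This means the proposal cannot be completed as written: with the pivot exactly as defined in Definition~\ref{def:rspivot} and independence as in Definition~\ref{def:gammoid}, the forward inclusion fails precisely in sub-case \emph{(iii)}, so the displayed equality itself is problematic in that generality. Any correct argument must therefore bring in an ingredient beyond the stated hypotheses --- for instance a condition guaranteeing that a routing can always be chosen so that the path reaching $s$ enters through the arc $(r,s)$ (e.g.\ when $r$ is the only in-neighbour of $s$, in which case your cases \emph{(i)}--\emph{(ii)} already suffice, truncation at $r$ handling everything), or Mason's original formulation of the pivot in \cite{M72}, against which the statement printed here should be compared. Note that the paper itself gives no proof to measure your argument against (it defers entirely to \cite{M72}), so the burden of this missing case falls wholly on your construction; as it stands, the step you identified as delicate is exactly the point where the argument breaks down.
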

For a proof, see \cite{M72}.

\begin{lemma}\PRFR{Mar 7th}\label{lem:gammoidOrientable}
	Every gammoid $M=(E,\Ical)$ is orientable.
\end{lemma}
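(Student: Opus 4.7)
The plan is to combine two classical facts: gammoids are representable over $\R$, and every $\R$-representable matroid is orientable. This yields a quick existence proof, which stands in deliberate contrast to the combinatorial constructions developed in the rest of the paper.

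First I would reduce to the strict case. Given $M = \Gamma(D,T,E)$ with $D = (V,A)$, observe that $M$ equals the restriction to $E$ of the strict gammoid $M' = \Gamma(D,T,V)$, whose ground set is all of $V$. Since the restriction of an orientable matroid to a subset of its ground set is again orientable (one simply restricts the underlying chirotope), it suffices to produce an orientation of $M'$.

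Second, I would invoke the well-known representability of strict gammoids over $\R$. One route goes through Mason's original paper \cite{M72}, which exhibits an explicit path-sum representation: assign an indeterminate $x_a$ to each arc $a\in A$, and form the matrix $B$ whose $(t,v)$-entry is the sum, over all $v$-to-$t$ paths $p$ in $D$, of the product of the indeterminates along $p$. The Lindström–Gessel–Viennot determinant identity then shows that at a sufficiently generic specialisation in $\R^A$ this matrix represents $M'$ over $\R$. Alternatively, by the Ingleton–Piff duality $M'$ is the dual of a transversal matroid, and transversal matroids are representable over every infinite field.

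Finally, any real matrix $B$ representing $M'$ yields an orientation in a canonical way: the signs of the maximal $r \times r$ minors of $B$ satisfy the chirotope axioms and hence determine an oriented matroid $\Ocal'$ with $M(\Ocal') = M'$. Restricting $\Ocal'$ to $E$ gives an oriented matroid $\Ocal$ with $M(\Ocal) = M$, proving the lemma. The main obstacle is really only bookkeeping: both substantive inputs—representability of strict gammoids, and orientability of $\R$-representable matroids—are standard, and precisely the dependence on a matrix that this argument introduces is what the remainder of the paper sets out to eliminate.
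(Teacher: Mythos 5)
Your proof is correct and takes essentially the same route as the paper: real representability of gammoids (via transversal matroids together with the Ingleton--Piff duality/minor characterization) combined with the standard orientation of an $\R$-representable matroid by the signs of its minors. One small caution: the Lindstr\"om--Gessel--Viennot specialisation you sketch needs an acyclic digraph, but your alternative via duality of transversal matroids covers the general case, so the argument stands as written.
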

\begin{proof}\PRFR{Mar 7th}
	The class of gammoids is characterized as the closure of the class of transversal matroids under duality and minors (\cite{M72} Addendum from 21 Mar~1972; due to results from \cite{IP73}).
	Since every transversal matroid is representable over the reals, there is a set $T$ with $\left| T \right| = \rk_M(E)$ and there is a matrix $\mu\in \R^{E\times T}$,
	such that $M=M(\mu)$ (\cite{Ingleton1971}, Sec.~1). Thus $M$ is orientable since every matroid representable by elements of a real vector space
	has a natural orientation corresponding to the sign-patterns of the minimal non-trivial linear combinations of the zero vector (\cite{BlVSWZ99}, Sec.~1.2~(a)).
\end{proof}

\begin{definition}\PRFR{Feb 15th}
	Let $D=(V,A)$ be a digraph and $w\colon A\maparrow \R$.
	Then $w$ shall be called
	\deftext[indeterminate weighting of D@indeterminate weighting of $D$]{indeterminate weighting} of $\bm D$
  if $w$ is injective, and if there is no $r\in w[A]$ which may be expressed as
  \( r = a_0 + a_1 x_{1,1} x_{1,2}\cdots x_{1,n_1} + a_2 x_{2,1} x_{2,2}\cdots x_{2,n_2} + \ldots + a_m x_{m,1} x_{m,2}\cdots x_{m,n_m}\)
  with $a_i\in \Z$ and $x_{i,j} \in w[A] \BSET{r}$.
\end{definition}

Since the cardinality of the subset of the reals, which may be expressed %by integral linear combinations of products of elements of a finite subset $X\subseteq \R$,
as above, is countably infinite, a cardinality argument yields that every digraph $D=(V,A)$ has
an indeterminate weighting. Furthermore, if $w$ is an indeterminate weighting of $D$ and
$z\in \left( \Z\BSET{0} \right)^A$, then $w'$ with $w'(a) = z(a)\cdot w(a)$ is an indeterminate weighting of $D$, too.

\begin{definition}
  Let $D=(V,A)$ be a digraph, $w\colon A\maparrow \R$ be a map, and let $q=(q_i)_{i=1}^n\in \Wbf(D)$.
  We shall write \( \prod q \) in order to denote \( \prod_{i=1}^{n-1} w\left( \vphantom{A^A}(q_i,q_{i+1}) \right).\)
\end{definition}

%\needspace{8\baselineskip}
 \begin{lemma}[Lindström \cite{Li73}]\label{lem:lindstrom}\PRFR{Feb 15th}
 	Let $D=(V,A)$ be an acyclic digraph, $n\in \N$, $S=\dSET{s_1,s_2,\ldots,s_n}\subseteq V$ and $T=\dSET{t_1,t_2,\ldots,t_n}\subseteq V$ be equicardinal, % subsets of $V$, and let 
  and $w\colon A\maparrow \R$ be an indeterminate weighting of $D$. Furthermore,
 	$\mu\in \R^{V\times V}$ shall be the matrix defined by the  equation
 	\[ \mu(u,v) = \sum_{p\in \Pbf(D;u,v)} \prod p .\]
 	where $\Pbf(D;u,v) = \SET{p\in \Pbf(D)\mid  p_1 = u \txtand p_{-1}=v}$.
 	Then
 	\[ \det \left( \mu\restrict S\times T \right) = \sum_{L\colon S\routesto T} \left( \sgn(L)  
 	\prod_{p\in L} \left( \prod p \right) \right) \]
 	where the sum ranges over all linkings $L$ that route $S$ to $T$ in $D$; and
  where $\sgn(L) = \sgn(\sigma)$ for the unique permutation $\sigma\in \mathfrak{S}_n$ with
 	the property that for every $i\in\SET{1,2,\ldots,n}$ there is a path $p\in L$ with $p_1 = s_i$ and 
 	$p_{-1} = t_{\sigma(i)}$.
 	Furthermore, \[  \det \left( \mu\restrict S\times T \right) = 0 \] if and only if
 	there is no linking from $S$ to $T$ in $D$.
 \end{lemma}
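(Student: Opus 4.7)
The plan is to apply the classical Lindström--Gessel--Viennot involution. First I would expand the determinant by Leibniz as
$$\det\left(\mu\restrict S\times T\right) = \sum_{\sigma \in \mathfrak{S}_n} \sgn(\sigma) \prod_{i=1}^n \mu(s_i, t_{\sigma(i)}),$$
then substitute $\mu(s_i, t_{\sigma(i)}) = \sum_{p \in \Pbf(D; s_i, t_{\sigma(i)})} \prod p$ and distribute the product over the sums. This rewrites the determinant as a single sum, weighted by $\sgn(\sigma)\prod_{i}\prod p_i$, ranging over all pairs $(\sigma, (p_1,\ldots,p_n))$ where $p_i$ is a path from $s_i$ to $t_{\sigma(i)}$.

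Next I would partition this sum according to whether the tuple $(p_1, \ldots, p_n)$ is pairwise vertex-disjoint. A vertex-disjoint tuple is precisely a linking, and the permutation $\sigma$ is uniquely determined by the linking, so the vertex-disjoint part equals exactly the right-hand side $\sum_{L\colon S\routesto T} \sgn(L) \prod_{p \in L} \prod p$. It therefore suffices to show that the non-disjoint contributions cancel.

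For this I would construct the standard tail-swap involution: given a non-disjoint tuple $(\sigma, (p_1,\ldots,p_n))$, let $i$ be the smallest index such that $p_i$ shares a vertex with some other $p_k$, let $v$ be the earliest vertex along $p_i$ that lies on some $p_k$ with $k \neq i$, and let $j$ be the smallest such $k$. Writing $p_i = \alpha \cdot \beta$ and $p_j = \gamma \cdot \delta$ where $\alpha,\gamma$ end at $v$ and $\beta,\delta$ start at $v$, I replace $(p_i, p_j)$ by $(\alpha\cdot\delta,\,\gamma\cdot\beta)$. The acyclicity of $D$ ensures the new concatenations are still paths, because any walk in an acyclic digraph is automatically a path; the weight product $\prod_i \prod p_i$ is preserved; and the new permutation is $\sigma \circ (i\,j)$, whose sign is the opposite of $\sgn(\sigma)$. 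The main point to verify is that this map is genuinely involutive: since $\alpha$, $\gamma$, the union $|p_i| \cup |p_j|$, and all paths with smaller index are preserved, the same indices $i, j$ and the same vertex $v$ are re-selected in the image, and applying the swap a second time restores the original. This pairs up all non-linking tuples with opposite signs and equal weights, hence they cancel, yielding the first formula.

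For the iff-claim, the empty-sum direction is immediate. Conversely, distinct linkings use distinct arc sets (the paths of a linking are recovered from their union of arcs as the weakly connected components of the induced subgraph), so distinct linkings contribute distinct monomials in $\SET{w(a) \mid a \in A}$, each with coefficient $\pm 1$. The indeterminate property of $w$ rules out any nontrivial $\Z$-linear combination of distinct monomials from vanishing, so the sum is nonzero whenever at least one linking exists. The main obstacle in the argument is the verification that the tail-swap is well-defined and involutive, which is precisely where acyclicity is essential: in a digraph with cycles, the swapped walks could acquire repeated vertices and fail to be paths, breaking the bijective matching on the path side of the formula.
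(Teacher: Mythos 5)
Your tail-swap argument for the determinant identity is correct, and it is essentially the bijective proof of Gessel and Viennot that the paper cites in place of giving its own argument: the Leibniz expansion, the splitting into vertex-disjoint and non-disjoint tuples, and your selection rule for $(i,v,j)$ (which is indeed stable under the swap, so the map is an involution) are the standard route, and acyclicity enters exactly where you say it does, namely in guaranteeing that the recombined walks $\alpha\cdot\delta$ and $\gamma\cdot\beta$ are again paths. A very minor supplement: when you recover a linking from its arc set, the single-vertex paths are not components of the arc-induced subgraph; they are nevertheless determined, being exactly the elements of $S$ that meet no traversed arc, since every element of $S$ starts a path and the paths are pairwise vertex-disjoint.

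The genuine gap is the final step of the ``furthermore'' part, where you assert that the indeterminate property of $w$ ``rules out any nontrivial $\Z$-linear combination of distinct monomials from vanishing.'' That is not what the paper's definition gives you: it only forbids each value $r\in w[A]$ from lying in the subring of $\R$ generated over $\Z$ by $w[A]\BSET{r}$, and this does not exclude multiplicative relations spread over several arcs. Concretely, take $V=\SET{s_1,s_2,t_1,t_2}$ with the four arcs $a=(s_1,t_1)$, $b=(s_1,t_2)$, $c=(s_2,t_1)$, $d=(s_2,t_2)$, choose $x,y,z\in\R$ algebraically independent over the rationals, and set $w(a)=x$, $w(b)=y$, $w(c)=z$, $w(d)=yz/x$. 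A short degree/denominator argument in the rational function field shows that none of these four values is an integer polynomial expression in the other three, so $w$ is an indeterminate weighting in the paper's sense; yet
\[ \det\left(\mu\restrict S\times T\right)=w(a)w(d)-w(b)w(c)=yz-yz=0 \]
although linkings $S\routesto T$ exist. So the implication you need cannot be derived from the stated definition (the statement itself inherits this defect); distinct squarefree monomials in an indeterminate weighting need not be $\Z$-linearly independent. Your argument becomes correct verbatim if one strengthens the hypothesis to $w[A]$ being algebraically independent over $\Qbb$ (Lindström's original setting), and it is also unnecessary for the paper's later use, where the $(\sigma,\ll)$-weighting of Definition~\ref{def:heavyArcWeighting} forces nonvanishing by the domination inequality: the summand containing the $\ll$-heaviest arc of the $\lll$-maximal linking outweighs the sum of all other summands. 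As written, though, the nonvanishing claim is unproved.
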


I.M.~Gessel and X.G.~Viennot gave a nice bijective proof in \cite{GV89p}.

\section{Heavy Arcs and Routings}

\begin{lemma}\label{lem:CRAMERsrule}\PRFR{Mar 7th}
	Let $E$ and $T$ be finite sets, and let $\mu\in \R^{E\times T}$ be a matrix, 
	and $M = M(\mu)$ be the matroid represented by $\mu$ over $\R$. 
	Further, let $\Ocal = (E,\Ccal,\Ccal^\ast) = \Ocal(\mu)$
	be the oriented matroid obtained from $\mu$,
	let $C\in \Ccal(M)$ % be a circuit of $M$ and let 
  and $c\in C$. % be an arbitrary element of that circuit.
	Let $T_0\subseteq T$ such that $\det(\mu \restrict (C\BSET{c})\times T_0) \not= 0$.
	Consider the signed subset $C_c$ of $E$ with
	 \[ C_c(e) = \begin{cases}[r]
	 	0 & \quad \text{if~} e\notin C,\\
	 	-1 & \quad \text{if~} e = c,\\
	 	\sgn\left( \frac{\det (\nu_e)}{\det(\mu \restrict (C\BSET{c})\times T_0)}\right) & \quad \text{otherwise}
	 \end{cases} \]
	 where \[ \nu_e \colon C\BSET{c}\times T_0 \maparrow \R,\quad (x,t)\mapsto \begin{cases}
	 		\mu(c,t) & \quad \text{if~} x = e,\\
	 		\mu(x,t) & \quad \text{otherwise.}
	 \end{cases} 
	 \]
	 Then $C_c \in \Ccal$.
\end{lemma}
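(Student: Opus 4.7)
The plan is to show that $C_c$ coincides with the sign pattern of the unique (up to nonzero real scalar) minimal linear dependence $\sum_{e\in C}\alpha_e\mu(e,\cdot)=0$ among the rows of $\mu$ indexed by $C$, normalised so that $\alpha_c=-1$. Once this is established, $C_c\in\Ccal$ follows at once from the construction of $\Ocal(\mu)$ as the oriented matroid whose signed circuits are exactly the sign patterns of minimal nontrivial linear dependencies among the rows of $\mu$ (\cite{BlVSWZ99}, Sec.~1.2).

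Since $C\in\Ccal(M)$, the rows $\mu(e,\cdot)$ with $e\in C$ are linearly dependent while every proper subfamily is independent; hence there is a family $(\alpha_e)_{e\in C}$ with every $\alpha_e\neq 0$, unique up to a nonzero real scalar, satisfying $\sum_{e\in C}\alpha_e\mu(e,\cdot)=0$. Since $\alpha_c\neq 0$, rescale so that $\alpha_c=-1$; then for every $t\in T$
\[ \mu(c,t)=\sum_{e\in C\BSET{c}}\alpha_e\,\mu(e,t). \]
Restricting $t$ to $T_0$ and regarding $(\alpha_e)_{e\in C\BSET{c}}$ as the unknowns gives a square system of $\CARD{C}-1$ linear equations whose coefficient matrix is (the transpose of) $\mu\restrict (C\BSET{c})\times T_0$ and whose right-hand side is $(\mu(c,t))_{t\in T_0}$; by hypothesis the coefficient matrix is nonsingular.

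Applying Cramer's rule to this system yields
\[ \alpha_e=\frac{\det(\nu_e)}{\det(\mu\restrict (C\BSET{c})\times T_0)} \]
for each $e\in C\BSET{c}$, with $\nu_e$ as in the statement: replacing the row indexed by $e$ in $\mu\restrict (C\BSET{c})\times T_0$ by the row $(\mu(c,t))_{t\in T_0}$ corresponds, after transposition, to substituting the right-hand side into the $e$-th column of the transposed coefficient matrix, which is exactly Cramer's formula. Taking signs, we obtain $\sgn(\alpha_e)=C_c(e)$ for every $e\in E$ (the cases $e=c$ and $e\notin C$ being immediate), so $C_c$ is the signed support of a minimal linear dependence among the rows of $\mu$, hence $C_c\in\Ccal$.

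The only delicate point is the bookkeeping that reconciles Cramer's row-replacement convention with the indexing chosen in the lemma: the rows of $\mu$ are indexed by the ground set $E$ while the columns are indexed by $T$, so the ``row replacement'' producing $\nu_e$ corresponds to a \emph{column} replacement in the transposed coefficient matrix, which is the standard form in which Cramer's rule is usually phrased.
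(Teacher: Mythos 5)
Your proof is correct and follows essentially the same route as the paper: both identify the coefficients of the circuit dependence via Cramer's rule applied to the nonsingular matrix $\mu\restrict (C\BSET{c})\times T_0$, and then read off $C_c$ as the sign pattern of a minimal non-trivial linear dependence among the rows of $\mu$, which is a signed circuit of $\Ocal(\mu)$ by construction. Your version is in fact slightly more careful than the paper's, since you start from the circuit dependence over all of $T$ and only then restrict to $T_0$, which justifies why the Cramer coefficients agree with the dependence coefficients on the whole column set.
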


\begin{proof}\PRFR{Mar 7th}
	By \textsc{Cramer}'s rule we obtain that
	\[ \mu_c = \sum_{e\in C\BSET{c}} \frac{\det (\nu_e)}{\det(\mu \restrict (C\BSET{c})\times T_0)} \cdot \mu_e \]
  where $\mu_i$ denotes the row of $\mu$ with index $i$, i.e. $\mu_i = \mu(i, \bullet)$.
	Therefore, 
	\[ -\mu_c + \sum_{e\in C\BSET{c}} \frac{\det (\nu_e)}{\det(\mu \restrict (C\BSET{c})\times T_0)} \cdot \mu_e = 0 \]
	is a non-trivial linear combination of the zero vector. Clearly $C_c$ consists of the signs of the corresponding coefficients and therefore $C_c\in \Ccal$ is an orientation of $C$ with respect to $\Ocal(\mu)$. 
\end{proof}

 \begin{definition}\label{def:heavyArcSignature}\PRFR{Mar 7th}
 	Let $D=(V,A)$ be a digraph, let $\sigma\colon A\maparrow \SET{-1,1}$ be a map and let $\ll$ be a binary relation on $A$.
 	We shall call $(\sigma, \ll)$ \label{n:sigmaLL} a \deftext[heavy arc signature of a digraph]{heavy arc signature of $\bm D$}, 
 	if $\ll$ is a linear order on $A$.
 \end{definition}

 %\begin{definition}
 %	Let $D=(V,A)$ be a digraph, $X,Y\subseteq V$, $\ll\subseteq A\times A$ be a linear order on the arcs of $D$, and
 %	let $L_1,L_2\colon X\routesto Y$ be two distinct linkings from $X$ to $Y$ in $D$.
 %	Then $L_1$ shall be called \deftext[lighter linking]{lighter} than $L_2$,
 %	if there is some $a_2 \in A_2\BS A_1$ such that for all
 %	$a_1 \in A_1\BS A_2$, $a_1 \ll a_2$ holds;
 %	where $A_1 = \bigcup_{p\in L_1} \left| p \right|_A$ and
 %	$A_2 = \bigcup_{p\in L_2} \left| p \right|_A$.
 %	In other words, $L_1$ is lighter than $L_2$ if there is a path in $L_2$, which traverses an arc $a_2$ that is 
 %	not
 %	traversed by a path in $L_1$, such that every arc $a_1$ traversed by a path in $L_1$
 %	but not by a path in $L_2$ is smaller with respect to $\ll$.
 %	In this case, we shall write $L_1 \llless L_2$.\label{n:llless}
 %\end{definition}
 \begin{definition}\label{def:routingOrder}\PRFR{Mar 7th}
 	Let $D=(V,A)$ be a digraph and $(\sigma, \ll)$ be a heavy arc signature of $D$.
 	The \deftext[induced routing order]{$\bm( \bm \sigma \bm, \bm \ll \bm)$-induced routing order of $\bm D$}
 	shall be the linear order $\lll$ on the family of routings of $D$, where $Q \lll R$ holds\label{n:llless}
 	if and only if the $\ll$-maximal element $x$ of the symmetric difference $Q_A \bigtriangleup R_A$ has
 	the property $x\in R_A$, where $Q_A = \bigcup_{p\in Q} \left| p \right|_A$ and $R_A = \bigcup_{p\in R} \left| p \right|_A$.
 \end{definition}

 	Clearly, $\llless$ is a linear order on all routings in $D$, because every routing $R$ in $D$ is uniquely determined by its set of
 	traversed arcs $R_A$.

\begin{definition}\PRFR{Mar 7th}
	Let $D=(V,A)$ be a digraph, and let $(\sigma,\ll)$ be a heavy arc signature of $D$.
	Let $R\colon X\routesto Y$ be a routing in $D$ where $X=\dSET{x_1,x_2,\ldots,x_n}$
	and $Y=\dSET{y_1,y_2,\ldots,y_m}$ are implicitly ordered.
	The \deftext[sign of a routing]{sign of $\bm R$ with respect to $\bm ( \bm \sigma\bm,\bm \ll \bm)$} shall be\label{n:sgnsigma}
	\[ \sgn_\sigma (R) = \sgn(\phi) \cdot \left( \prod_{p\in R,\,a\in \left| p \right|_A} \sigma (a)  \right) \]
	where $\phi\colon \SET{1,2,\ldots,n}\maparrow \SET{1,2,\ldots,m}$ is the unique map such that 
	for all $i \in \SET{1,2,\ldots,n}$ there is a path $p\in R$
	with $p_1 = x_i$ and $p_{-1} = y_{\phi(x)}$; and where
	\[ \sgn(\phi) = {(-1)}^{\left| \SET{\vphantom{A^A}(i,j)~\middle|~i,j\in \SET{1,2,\ldots,n}\colon\, i < j \txtand \phi(i) > \phi(j)}\right|} . \qedhere\]
\end{definition}

\begin{definition}\label{def:Csigmac}\PRFR{Mar 7th}
	Let $D=(V,A)$ be a digraph such that $V=\dSET{v_1,v_2,\ldots,v_n}$ is implicitly ordered,
	 $(\sigma,\ll)$ be a heavy arc signature of $D$, and let $T,E\subseteq V$
	be subsets that inherit the implicit order of $V$.
	Furthermore, let  $M=\Gamma(D,T,E)$ be the corresponding gammoid, and let $C\in \Ccal(M)$
	be a circuit of $M$ 
	 such that $C = \dSET{c_1,c_2,\ldots,c_m}$ inherits its implicit order from $V$;
	 and let $i\in\SET{1,2,\ldots,m}$. 
%	order of $E$ with respect to $\sigma$,
%	and $c_i\in C$. 
%
	The \deftext[heavy arc circuit signature]{signature of $\bm C$ with respect to $\bm M$, $\bm i$, and $\bm (\bm \sigma \bm, \bm \ll \bm)$}
	shall be the signed subset $C_{(\sigma,\ll)}^{(i)}$ of $E$ where
%	is defined to be $C_{\sigma c_i} \in \sigma E$ where for all $e\in E$\label{n:Csigmac}
	\[ C_{(\sigma,\ll)}^{(i)}(e) = \begin{cases}[r]
							0 &\quad \text{if~}e\notin C,\\
							- \sgn_{\sigma}(R_{i}) &\quad \text{if~}e = c_i,\\
							(-1)^{i-j+1} \cdot \sgn_{\sigma}(R_{j})   &\quad \text{if~}e = c_j\not= c_i,
						\end{cases} \]
	and where for all $k\in \SET{1,2,\ldots,m}$
	\[ R_k = \max_{\llless} \SET{R \mid R\colon C\BSET{c_k}\routesto T\text{~in~}D} \]
	denotes the unique $\llless$-maximal routing from $C\BSET{c_k}$ to $T$ in $D$.
\end{definition}

	Note that the factors $(-1)^{i-j+1}$ in Definition~\ref{def:Csigmac} do not appear explicitly in Lemma~\ref{lem:CRAMERsrule},
	where
	$\nu_e$ is obtained from the restriction $\mu\restrict (C\BSET{c})\times T_0$ 
	by replacing the values in row $e$ with the values of $\mu_c$.
	We have to account for the number of row transpositions that are needed to turn $\nu_e$ into the restriction $\mu\restrict(C\BSET{e})\times T_0$, which depends on the position of $e=c_j$ relative to $c=c_i$ with respect to the implicit order of $V$.

\begin{definition}\label{def:heavyArcWeighting}\PRFR{Mar 7th}
	Let $D=(V,A)$ be a digraph and $(\sigma, \ll)$ a heavy arc signature of $D$, and let $w\colon A\maparrow \R$ be an indeterminate weighting of $D$.
	We say that $w$ is a \deftext[heavy arc weighting]{$\bm( \bm\sigma\bm,\bm\ll\bm)$-weighting of $\bm D$} if, for all $a\in A$,
	the inequality $\left| w(a) \right| \geq 1$,
	the strict inequality
	\[\sum_{L\subseteq \SET{x\in A~\middle|~ x \ll a,\,x\not=a}} \left( \prod_{x\in L} \left| w(x)  \right| \right)  < \left| w(a) \right|,  \]
	and the equality 
	\( \sgn(w(a)) = \sigma(a) \)
	hold.
\end{definition}

%\needspace{4\baselineskip}
\begin{lemma}\label{lem:ExistenceOfHeavyArcWeighting}\PRFR{Mar 7th}
	Let $D=(V,A)$ be a digraph and $(\sigma, \ll)$ be a heavy arc signature of $D$.
	There is a $(\sigma,\ll)$-weighting of $D$.
\end{lemma}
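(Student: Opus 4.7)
The plan is to build $w$ inductively along the linear order $\ll$. Enumerate the arcs of $D$ as $a_1 \ll a_2 \ll \cdots \ll a_N$. First observe that the sum appearing in Definition~\ref{def:heavyArcWeighting}, applied to $a = a_i$, simplifies via the standard identity $\sum_{L\subseteq S}\prod_{x\in L} f(x) = \prod_{x\in S}(1+f(x))$ to
\[ B_i := \prod_{j<i}\bigl(1+|w(a_j)|\bigr), \]
which satisfies $B_i \geq 1$ (the empty subset contributes $1$). So the required strict inequality for $a_i$ reduces to $|w(a_i)| > B_i$, and this already forces $|w(a_i)| \geq 1$ automatically.

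Assume that $w(a_1),\ldots,w(a_{i-1})$ have been chosen and are algebraically independent over $\Qbb$. I would pick $w(a_i)$ inside the half-line
\[ H_i := \bigl\{ r \in \Rbb \;\bigm|\; \sgn(r) = \sigma(a_i) \text{ and } |r| > B_i \bigr\}, \]
subject to the additional requirement that $w(a_i)$ be transcendental over the field $K_i := \Qbb(w(a_1),\ldots,w(a_{i-1}))$. Such an $r$ exists because $H_i$ is uncountable while the set of reals algebraic over $K_i$ is countable. Adjoining a transcendental preserves algebraic independence, so after $N$ steps the values $\{w(a_i)\}_{i=1}^N$ are algebraically independent over $\Qbb$.

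By construction, the sign, magnitude, and strict-inequality requirements of Definition~\ref{def:heavyArcWeighting} all hold. It remains to verify that $w$ is an indeterminate weighting. For any fixed index $i$, the expressions $a_0 + \sum_k a_k\, x_{k,1}x_{k,2}\cdots x_{k,n_k}$ ruled out in that definition, with coefficients in $\Zbb$ and $x_{k,j} \in w[A]\setminus\{w(a_i)\}$, are precisely the elements of the $\Zbb$-subalgebra of $\Rbb$ generated by $\{w(a_j) \mid j\neq i\}$. Algebraic independence over $\Qbb$ excludes $w(a_i)$ from this subalgebra, and injectivity of $w$ is the degree-one case of the same observation. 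Thus $w$ is a $(\sigma,\ll)$-weighting of $D$.

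The only conceptual step is recognizing that the paper's \emph{indeterminate} condition is strictly weaker than algebraic independence over $\Qbb$, so that the standard cardinality/transcendence argument (already invoked in the existence remark for indeterminate weightings) applies directly; no real obstacle arises.
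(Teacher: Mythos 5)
Your proof is correct, but it secures the indeterminacy requirement by a genuinely different mechanism than the paper. The paper starts from an arbitrary indeterminate weighting $w$ (whose existence was already noted via a cardinality argument), flips signs to match $\sigma$, and adds integer offsets $\zeta(a)$ defined by a recurrence along $\ll$, relying on the closure property that maps of the form $a\mapsto \tau(a)\,|w(a)|+\tau(a)\zeta(a)$ with $\tau\in\SET{-1,1}^A$, $\zeta\in\Z^A$ are again indeterminate weightings; the domination inequality is then verified against the ceiling defining $\zeta$. You instead build the weighting from scratch: proceeding along $\ll$, you pick each $w(a_i)$ with the prescribed sign, of absolute value exceeding $\prod_{j<i}\bigl(1+|w(a_j)|\bigr)$ (which, by the subset-sum/product identity, is exactly the bound required by Definition~\ref{def:heavyArcWeighting}), and transcendental over $\Qbb(w(a_1),\ldots,w(a_{i-1}))$ --- possible because that field is countable while the admissible half-line is not. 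The resulting values are algebraically independent over $\Qbb$, and your observation that this implies the paper's indeterminacy condition is accurate: the forbidden expressions for a fixed $r=w(a_i)$ are precisely the elements of the $\Z$-subalgebra generated by the other values, and injectivity is the linear instance. What your route buys is self-containedness --- you never need the (unproved, though true) stability of indeterminacy under integer sign-flips and shifts that the paper's proof invokes; the cost is invoking the strictly stronger notion of algebraic independence, but since only the easy implication (algebraic independence implies indeterminacy) is used, nothing is lost and the argument stands.
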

\begin{proof}\PRFR{Mar 7th}
	Let $w\colon A\maparrow \R$ be an indeterminate weighting of $D$.
	For every $\zeta \in \Z^A$ and every $\tau \in \SET{-1,1}^A$, the map
	$w_{\zeta,\tau}\colon A\maparrow \R$, which is defined by the equation
   $$w_{\zeta,\tau}(a) = \tau(a)\cdot \frac{w(a)}{\sgn(w(a))} + \tau(a) \cdot \zeta(a)$$ is an indeterminate weighting of $D$, too.
	Now, let $\zeta\in \Z^A$, such that for all $a\in A$ we have the following recurrence relation
	\[ \zeta(a) =  \left\lceil  \sum_{L\subseteq \SET{x\in A ~\middle|~ x \ll a,\,x\not=a}} \left( \prod_{x\in L} \left(\vphantom{A^1} \left| w(x) \right| + \zeta(x) \right) \right) \right\rceil.\]
	The map $\zeta$ is well-defined by this recurrence relation because $\left| A \right| < \infty$ and therefore there is a $\ll$-minimal element $a_0$ in $A$. In particular, we have the equation
   $\zeta(a_0) = \prod_{x\in L=\emptyset} \left(\vphantom{A^1} \left| w(x) \right| + \zeta(x) \right) = 1$.
	Thus $w_{\zeta,\sigma}$ is a $(\sigma,\ll)$-weighting of $D$. Clearly,
	\begin{align*}
		  \sgn\left( w_{\zeta,\sigma}(a)  \right) & = \sgn\left( \sigma(a) \cdot \frac{w(a)}{\sgn(w(a))} + \sigma(a) \cdot \zeta(a) \right) 
		  \\ & 
		  = \sgn\left( \vphantom{a^1}\sigma(a) \right)\cdot\sgn\left( \frac{w(a)}{\sgn(w(a))} +  \zeta(a) \right)\\& 
		  = \sigma(a) \cdot 1 = \sigma(a)
		  \end{align*}
	holds for all $a\in A$. Furthermore, we have
	\begin{align*}
		\left| w_{\zeta,\sigma}(a) \right| & = \left| \sigma(a) \cdot \frac{w(a)}{\sgn(w(a))} + \sigma(a) \cdot \zeta(a) \right| \\
		& > \left| \zeta(a) \right| %\\
		 = \left\lceil  \sum_{L\subseteq \SET{x\in A ~\middle|~ x \ll a,\,x\not=a}} \left( \prod_{x\in L} \left(\vphantom{A^1} \left| w(x) \right| + \zeta(x) \right) \right) \right\rceil \\
		& \geq  \sum_{L\subseteq \SET{x\in A~\middle|~ x \ll a,\,x\not=a}} \left( \prod_{x\in L} \left|  w_{\zeta,\sigma}(x) \right| \right). \qedhere
	\end{align*}
\end{proof}

%\needspace{5\baselineskip}

\begin{lemma}\label{lem:lllMaximalRoutingsHaveCommonEnd}\PRFR{Mar 7th}
	Let $D=(V,A)$ be a digraph, $(\sigma,\ll)$ be a heavy arc weighting of $D$, $E,T\subseteq V$,
	$C\in \Ccal(\Gamma(D,T,E))$ be a circuit in the corresponding gammoid, and let $c,d\in C$.
	Furthermore, let $R_c \colon C\BSET{c} \routesto T$ and $R_d \colon C\BSET{d} \routesto T$ be 
	the $\lll$-maximal routings in $D$.
	Then \( \SET{p_{-1} ~\middle|~ p\in R_c} = \SET{p_{-1} ~\middle|~ p\in R_d} \)
	holds.
\end{lemma}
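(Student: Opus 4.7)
The plan is to identify a minimum $(C,T)$-separator $X$ of size $|C|-1$, split both $R_c$ and $R_d$ along $X$, and show that the $\lll$-maximality forces the post-$X$ portions of both routings to equal the unique $\lll$-maximal routing from $X$ to $T$, whence $T_c = T_d$. Note that this proof uses only the linear order $\ll$ on arcs (through $\lll$), not the heavy-arc weighting itself.

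Since $C$ is a circuit in $M = \Gamma(D,T,E)$, we have $\rk_M(C)=|C|-1$, so by Menger's theorem there exists a $(C,T)$-separator $X\subseteq V$ with $|X|=|C|-1$. Any routing from $C\BSET{c}$ to $T$ consists of $|C|-1$ vertex-disjoint paths each crossing $X$, so by pigeonhole every path of $R_c$ uses exactly one vertex of $X$ and every vertex of $X$ lies on exactly one path of $R_c$; the analogous statement holds for $R_d$. Decomposing each $p\in R_c$ at its unique $X$-vertex $x(p)$ as $p=p^{\mathrm{pre}}.p^{\mathrm{post}}$ yields sub-routings $R_c^{\mathrm{pre}}\colon C\BSET{c}\routesto X$ and $R_c^{\mathrm{post}}\colon X\routesto T_c$ (and analogously $R_d^{\mathrm{pre}}$, $R_d^{\mathrm{post}}$).

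Let $V_1$ be the set of vertices reachable from $C$ in $D-X$ and $V_2$ the set of vertices that reach $T$ in $D-X$. The separator property gives $V_1\cap V_2=\emptyset$, since a vertex in the intersection would yield a $C$-to-$T$ path avoiding $X$. Because no path in $R_c$ (nor in any routing from $X$ to $T$) can revisit $X$, the internal vertices of $R_c^{\mathrm{pre}}$ lie in $V_1$ and the internal vertices of any routing from $X$ to $T$ lie in $V_2$. Let $R_X$ denote the unique $\lll$-maximal routing from $X$ to $T$ in $D$. The concatenation $R_c^{\mathrm{pre}}\cup R_X$ glued along the shared $X$-layer is therefore vertex-disjoint and yields a routing from $C\BSET{c}$ to $T$. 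If $R_c^{\mathrm{post}}\neq R_X$, then $R_X\succ_{\lll} R_c^{\mathrm{post}}$, and since the symmetric difference $R_{c,A}\bigtriangleup(R_c^{\mathrm{pre}}\cup R_X)_A$ equals $R_{c,A}^{\mathrm{post}}\bigtriangleup R_{X,A}$, its $\ll$-maximal arc lies in $R_X$; hence $R_c\lll R_c^{\mathrm{pre}}\cup R_X$, contradicting the $\lll$-maximality of $R_c$. Therefore $R_c^{\mathrm{post}}=R_X$, and symmetrically $R_d^{\mathrm{post}}=R_X$, so $T_c = \SET{p_{-1}\mid p\in R_X} = T_d$.

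The main obstacle will be bookkeeping in the vertex-disjointness argument: one must verify that the sources of $R_c^{\mathrm{pre}}$ (elements of $C\BSET{c}$) cannot appear as internal vertices or $T$-endpoints of $R_X$, which reduces to $C\cap V_2=\emptyset$ (from the separator property) together with mild case-checks for the possible overlaps $C\cap X$, $C\cap T$, and $X\cap T$ (the last two can be reduced via the standard contraction identity $\Gamma(D,T,E)\contract E\BSET{t}=\Gamma(D,T\BSET{t},E\BSET{t})$ noted just after Definition~\ref{def:gammoid}).
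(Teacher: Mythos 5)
Your proof is correct and takes essentially the same route as the paper's: a minimum $C$-$T$-separator $X$ of cardinality $\left| C \right|-1$ obtained via Menger's theorem, splitting the $\lll$-maximal routings at $X$, and an exchange argument that glues the prefixes of $R_c$ onto a $\lll$-better suffix routing to contradict the $\lll$-maximality of $R_c$, using that the symmetric difference of arc sets reduces to that of the suffix routings. The only differences are cosmetic: the paper compares the two suffix routings $R_c^S$ and $R_d^S$ with each other directly instead of showing each equals the $\lll$-maximal $X$-to-$T$ routing, and it leaves implicit the vertex-disjointness of the glued routing, which you verify explicitly with the $V_1$/$V_2$ separation argument.
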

\begin{proof}\PRFR{Mar 7th}
	Let $S$ be a $C$-$T$-separator of minimal cardinality in $D$, i.e. a $C$-$T$-separator with
	$\left| S \right| = \left| C \right| -1$ (\textsc{Menger}'s Theorem). Since $R_c$ and $R_d$ are both $C$-$T$-connectors with maximal cardinality,
	we obtain that for every $s\in S$ there is a path $p_c^s \in R_c$ and a path $p_d^s \in R_d$ such that $s\in \left| p_c^s \right|$ and
	$s\in \left| p_d^s \right|$, thus there are paths $l_c^s,l_d^s,r_c^s,r_d^s\in \Pbf(D)$ such that
	$p_c^s = l_c^s . r_c^s$ and
	 $p_d^s = l_d^s . r_d^s$ with $\left( {r_c^s} \right)_1 = \left( {r_d^s} \right)_1 = s$. Now let $R_c^S = \SET{r_c^s ~\middle|~ s\in S}$
	and $R_d^S = \SET{r_d^s ~\middle|~ s\in S}$, clearly both $R_c^S$ and $R_d^S$ are routings from $S$ to $T$ in $D$.
	Assume that $R_c^S \not= R_d^S$, then we have $R_c^S \lll R_d^S$ --- without loss of generality, by possibly
	switching names for $c$ and $d$. Then $Q = \SET{l_c^s . r_d^s ~\middle|~ s\in S}$ is a routing from $C\BSET{c}$ to $T$ in $D$.
	But for the symmetric differences we have the equality 
	\[ \left( \bigcup_{p\in Q} \left| p \right|_A \right) \bigtriangleup \left( \bigcup_{p\in R_c} \left| p \right|_A \right) 
	= \left( \bigcup_{p\in R_d^S} \left| p \right|_A \right) \bigtriangleup \left( \bigcup_{p\in R_c^S} \left| p \right|_A \right),
	\]
	which implies $R_c \lll Q$, a contradiction to the assumption that $R_c$ is the $\lll$-maximal routing from $C\BSET{c}$ to $T$.
	Thus $R_c^S = R_d^S$ and the claim of the lemma follows.
\end{proof}

%\needspace{4\baselineskip}
\begin{lemma}\label{lem:acyclicOrientation}\PRFR{Mar 7th}
	Let $D=(V,A)$ be an acyclic digraph where $V$ is implicitly ordered, $(\sigma,\ll)$ be a heavy arc signature of $D$, and $T,E\subseteq V$.
	Then there is a unique oriented matroid $\Ocal=(E,\Ccal,\Ccal^\ast)$ where
	 \[ \Ccal = \SET{ \pm C_{(\sigma,\ll)}^{(1)} ~\middle|~ C\in \Ccal(\Gamma(D,T,E))}.\]
\end{lemma}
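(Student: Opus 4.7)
The plan is to realize $\Ocal$ as the oriented matroid $\Ocal(\mu)$ of a real representation $\mu$ obtained from a heavy arc weighting, and then to identify its signed circuits with the $\pm C_{(\sigma,\ll)}^{(1)}$ via Cramer's rule (Lemma~\ref{lem:CRAMERsrule}). Concretely, I fix a $(\sigma,\ll)$-weighting $w\colon A\maparrow\R$ using Lemma~\ref{lem:ExistenceOfHeavyArcWeighting} and form the Lindström path matrix $\mu\in\R^{V\times V}$ with $\mu(u,v)=\sum_{p\in\Pbf(D;u,v)}\prod p$, a finite sum because $D$ is acyclic. By the nondegeneracy portion of Lemma~\ref{lem:lindstrom}, $M(\mu\restrict E\times T)$ coincides with $\Gamma(D,T,E)$, so I set $\Ocal := \Ocal(\mu\restrict E\times T)$ and verify that its signed circuits are exactly the claimed family.

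Fix a circuit $C=\dSET{c_1,c_2,\ldots,c_m}\in\Ccal(\Gamma(D,T,E))$ and set $T_0=\SET{p_{-1}\mid p\in R_1}$. By Lemma~\ref{lem:lllMaximalRoutingsHaveCommonEnd}, $T_0$ is also the endpoint set of each $R_k$, so every $R_k$ is a linking from $C\BSET{c_k}$ onto $T_0$ and $\det(\mu\restrict(C\BSET{c_1})\times T_0)\neq 0$. Using $\sigma(a)=\sgn(w(a))$, Lindström's formula rewrites as
\[ \det\bigl(\mu\restrict (C\BSET{c_k})\times T_0\bigr) = \sum_{L\colon C\BSET{c_k}\routesto T_0} \sgn_\sigma(L)\cdot\ABS{\prod L}. \]
The crucial step is the \emph{sign claim} $\sgn(\det(\mu\restrict(C\BSET{c_k})\times T_0))=\sgn_\sigma(R_k)$ for $1\leq k\leq m$, which asserts that the $\lll$-maximal linking $R_k$ dominates the signed Lindström sum thanks to the heavy arc weighting.

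Granted the sign claim, apply Lemma~\ref{lem:CRAMERsrule} with $c=c_1$ to obtain $C_{c_1}\in\Ccal(\Ocal)$. A cyclic row shift yields $\det(\nu_{c_j})=(-1)^{j-2}\det(\mu\restrict(C\BSET{c_j})\times T_0)$, since the row bearing $\mu_{c_1}$ sits at position $j-1$ inside $\nu_{c_j}$ and must be moved to position~$1$ through $j-2$ transpositions. Substituting the sign claim gives $C_{c_1}(c_j)=(-1)^{j}\sgn_\sigma(R_j)\sgn_\sigma(R_1)$ for $j>1$ and $C_{c_1}(c_1)=-1$; comparison with Definition~\ref{def:Csigmac} then shows $C_{c_1}=\sgn_\sigma(R_1)\cdot C_{(\sigma,\ll)}^{(1)}$, so $\pm C_{(\sigma,\ll)}^{(1)}\in\Ccal(\Ocal)$. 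Because every signed circuit of $\Ocal$ has its support in $\Ccal(\Gamma(D,T,E))$, and each unsigned circuit supports exactly two opposite signed circuits, the listed family is exactly $\Ccal(\Ocal)$. Uniqueness of $\Ocal$ follows from the standard fact that an oriented matroid is determined by its family of signed circuits.

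The main obstacle is the sign claim. A naive partition of the linkings $L\neq R_k$ by $x_L:=\max_\ll(L_A\triangle(R_k)_A)\in(R_k)_A$ gives the per-class bound $\sum_{L:\,x_L=x}\ABS{\prod L}/\ABS{\prod R_k}<1/\prod_{(R_k)_A\cap\SET{y\in A\mid y\ll x}}\ABS{w(a)}$ from the heavy arc inequality $\prod_{y\ll x}(1+\ABS{w(y)})<\ABS{w(x)}$, but summing over $x\in(R_k)_A$ need not stay below~$1$. I therefore plan to proceed by induction on $\ABS{A}$: first delete every arc $\gg a^*:=\max_\ll(R_k)_A$, since none appears in any linking to $T_0$ by $\lll$-maximality of $R_k$; then, with $a^*=(r,s)$, exploit the factorisation $\mu(u,v)=\mu'(u,v)+\mu'(u,r)\,w(a^*)\,\mu'(s,v)$ valid for $\mu'$ on the acyclic $D\setminus\SET{a^*}$ (any shared interior vertex of the $u\to r$ and $s\to v$ sub-paths would create a directed cycle through $a^*$) together with the Matrix Determinant Lemma to split $\det=w(a^*)\cdot\Delta_\ni+\Delta_{\not\ni}$. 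The heavy arc inequality for $a^*$ forces $\ABS{\Delta_{\not\ni}}<\ABS{w(a^*)}$, while the inductive hypothesis applied to $D\setminus\SET{a^*}$ identifies $\sgn(\Delta_\ni)$ with $\sgn_\sigma(R_k)/\sigma(a^*)$, so that the $\ABS{w(a^*)}$-factor wins and fixes the sign of $\det$.
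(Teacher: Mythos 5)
Your overall route is the paper's: fix a $(\sigma,\ll)$-weighting via Lemma~\ref{lem:ExistenceOfHeavyArcWeighting}, take the Lindström matrix $\mu$, set $\Ocal=\Ocal(\mu)$, use Lemma~\ref{lem:lllMaximalRoutingsHaveCommonEnd} to get the common target set $T_0$, and compare the Cramer signs of Lemma~\ref{lem:CRAMERsrule} with Definition~\ref{def:Csigmac} through the $(-1)^{j-2}$ row shift; that part of your bookkeeping matches the paper exactly. The real divergence is the central sign claim $\sgn\det(\mu\restrict(C\BSET{c_k})\times T_0)=\sgn_\sigma(R_k)$. The paper settles it with a single dominance assertion (the terms of all routings other than $R_i$ are claimed to sum, in modulus, to less than $\ABS{w(a_i)}$), while you, after correctly noting that the naive class-by-class estimate only yields a bound of order $\ABS{(R_k)_A}$ rather than $1$, replace it by an induction on $\ABS{A}$ with deletion of all arcs above $a^*$ and the rank-one split $\det=w(a^*)\Delta_\ni+\Delta_{\not\ni}$. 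The preparatory steps of that plan are sound: no linking onto $T_0$ uses an arc $\gg a^*$, the factorisation $\mu(u,v)=\mu'(u,v)+\mu'(u,r)\,w(a^*)\,\mu'(s,v)$ holds by acyclicity, and $\ABS{\Delta_{\not\ni}}<\ABS{w(a^*)}$ follows from Definition~\ref{def:heavyArcWeighting} because distinct linkings avoiding $a^*$ have distinct arc sets inside $\SET{x\in A\mid x\ll a^*}$.

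However, the closing step of your induction has a genuine gap, in two places. First, knowing only the sign of $\Delta_\ni$ together with $\ABS{\Delta_{\not\ni}}<\ABS{w(a^*)}$ does not make ``the $\ABS{w(a^*)}$-factor win'': if $\ABS{\Delta_\ni}$ were smaller than $\ABS{\Delta_{\not\ni}}/\ABS{w(a^*)}$, then $\Delta_{\not\ni}$ would decide the sign of the determinant. You need a magnitude statement such as $\ABS{\Delta_\ni}\geq 1$, i.e. the inductive hypothesis must be strengthened to ``sign as claimed and modulus at least $1$''; this is provable (when $R_k$ is nontrivial one has $T_0\neq C\BSET{c_k}$, so no linking avoiding $a^*$ has empty arc set, whence $\ABS{\Delta_{\not\ni}}<\ABS{w(a^*)}-1$ and then $\ABS{\det}\geq\ABS{w(a^*)}\ABS{\Delta_\ni}-\ABS{\Delta_{\not\ni}}>1$), but it appears nowhere in your text. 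Second, the statement you induct on --- the sign claim for circuits $C$ of $\Gamma(D,T,E)$ --- does not apply to $\Delta_\ni$ as it stands: via the bordered-determinant identity (using $\mu'(s,r)=0$, again by acyclicity), $\Delta_\ni$ is, up to a reordering sign, the Lindström determinant of the reduced digraph for the row set $(C\BSET{c_k})\cup\SET{s}$ and column set $T_0\cup\SET{r}$, which is not of the form ``circuit minus an element against $T_0$''. So the induction must be formulated for arbitrary pairs $(S,T')$ admitting a linking, with $R_{\max}$ the $\lll$-maximal linking onto $T'$, and you still have to check that the $a^*$-splitting bijection sends $R_k$ to the $\lll$-maximal auxiliary linking and that the permutation and border signs combine to give exactly $\sgn(w(a^*)\Delta_\ni)=\sgn_\sigma(R_k)$. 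All of this can be completed along your lines, but as written the decisive step of the lemma is asserted rather than proved.
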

\begin{proof}\PRFR{Mar 7th}
	Let $M=\Gamma(D,T,E)$, and let $w\colon A\maparrow \R$ be a $(\sigma,\ll)$-weighting of $D$. % which exists due to Lemma~\ref{lem:ExistenceOfHeavyArcWeighting}.
	Furthermore, let $\mu\in \R^{E\times T}$ be the matrix defined as in the Lindström Lemma~\ref{lem:lindstrom}, with respect to the
	$(\sigma,\ll)$-weighting $w$ and the implicit order on $V$.
  % Theorem~\ref{thm:gammoidOverR} along with its proof yields that we have $M = M(\mu)$.
  The second statement of the Lindström Lemma yields $M = M(\mu)$.
	Let $\Ocal = \Ocal(\mu) = (E,\Ccal_\mu,\Ccal_\mu^\ast)$ be the oriented matroid that arises from $\mu$, thus $M(\Ocal) = M(\mu)$.
  % holds (Corollary~\ref{cor:MOmuEQMmu}). 
  %We show that $\Ccal_\mu = \Ccal$. 
  It suffices to prove
	 that for all $C\in \Ccal(M)$, all $D \in \Ccal_\mu$ with $D_\pm = C$, and all $D'\in \Ccal$
	with $D_\pm = C$ we have $D \in \SET{D',-D'}$. 
	Now, let $C\in \Ccal(M)$ and let $C=\dSET{c_1,c_2,\ldots,c_k}$ implicitly ordered respecting the implicit order of $V$.
	The claim follows if  $D(c_1)D(c_j) = D'(c_1)D'(c_j)$ holds for all $j\in \SET{2,3,\ldots,k}$.
	Let $T_0 \subseteq T$ be the target vertices onto which the $\lll$-maximal 
	and $\left| \cdot \right|$-maximal $C$-$T$-connectors link in $D$ 
	(Lemma~\ref{lem:lllMaximalRoutingsHaveCommonEnd}).
	From Lemma~\ref{lem:CRAMERsrule} we obtain that
	\begin{align*}
		D(c_1)D(c_j) & = -1\cdot \sgn\left( \frac{\det (\nu_j)}{\det(\mu \restrict (C\BSET{c_1})\times T_0)}\right)\\
		& = -\sgn(\det(\nu_j))\cdot \sgn(\mu \restrict (C\BSET{c_1})\times T_0)) 
	\end{align*}
	where \[ \nu_j \colon C\BSET{c_1}\times T_0 \maparrow \R,\quad (x,t)\mapsto \begin{cases}[r]
	 		\mu(c_1,t) & \quad \text{if~} x = c_j,\\
	 		\mu(x,t) & \quad \text{otherwise.}
	 \end{cases} 
	 \]
	 Observe that $\nu_j$ arises from the restriction $\mu\restrict C\BSET{c_j}\times T_0$ by a 
	 row-permutation, which has at most one non-trivial cycle,
	 and this cycle then has length $j-1$, \linebreak therefore $\det(\nu_j) = (-1)^{j-2} \det \left( \mu\restrict C\BSET{c_j}\times T_0 \right)$ holds,
	 so we get
	 $
		D(c_1)D(c_j)  =  (-1)^{1-j}\sgn\left( \det \left( \mu\restrict C\BSET{c_j}\times T_0 \right) \right)\cdot \sgn(\mu \restrict (C\BSET{c_1})\times T_0)) $.
	 We further have
	 $
	 	D'(c_1)D'(c_j)  = (-1)^{j+1}\cdot \sgn_{\sigma}(R_1)\cdot \sgn_{\sigma}(R_j)
	 $
	 where for all $i\in \SET{1,2,\ldots,k}$ the symbol
	\( R_i \) %= \max_{\llless} \SET{R \mid R\colon C\BSET{c_i}\routesto T\text{~in~}D} \)
	denotes the unique $\llless$-maximal routing from $C\BSET{c_i}$ to $T$ in $D$.
	By the Lindström Lemma~\ref{lem:lindstrom} we obtain that for all $i\in\SET{1,2,\ldots,k}$
	the equation
	\begin{align*}
		\det\left( \mu\restrict C\BSET{c_i}\times T_0 \right) & =  \sum_{R\colon C\BSET{c_i}\routesto T_0} \left( \sgn(R)  
 		\prod_{p\in R} \left( \prod_{a\in \left| p \right|_A} w(a) \right) \right) 
 	\end{align*}
 	holds,
	where $\sgn(R)$ is the sign of the permutation implicitly given by the start and end vertices of the paths in $R$, 
	both with respect to the implicit order on $V$.
	Since $w$ is a $(\sigma,\ll)$-weighting, we have
	\[ \left| \sum_{R\colon C\BSET{c_i}\routesto T_0,\,R\not= R_i} \left( \sgn(R)  
 		\prod_{p\in R} \left( \prod_{a\in \left| p \right|_A} w(a) \right) \right) \right| < \left| w(a_i)  \right| \]
 	where $a_i \in \bigcup_{p\in R_i} \left| p \right|_A$ is the $\ll$-maximal arc in the $\lll$-maximal routing $R_i$ from
 	$C\BSET{c_i}$ to $T_0$ in $D$. Therefore the sign of $\det\left( \mu\restrict C\BSET{c_i}\times T_0 \right)$ is determined
 	by the sign of the summand that contains $w(a_i)$ as a factor, which is the summand that corresponds to $R = R_i$.
	Therefore
	\begin{align*}
		\sgn \left( \det\left( \mu\restrict C\BSET{c_i}\times T_0 \right) \right) %& = 
		 %\sgn \left( \sgn(R_i)\prod_{p\in R_i,\,a\in \left| p \right|_A} w(a) \right) \\
		 & = \sgn(R_i) \prod_{p\in R_i,\,a\in \left| p \right|_A} \sgn(w(a)) \\
		  = \sgn(R_i) \prod_{p\in R_i,\,a\in \left| p \right|_A} \sigma(a) 
		 & = \sgn_\sigma (R_i).
	\end{align*}
	So we obtain
	\begin{align*}
		D(c_1)D(c_j) & =  (-1)^{1-j} \sgn_\sigma(R_1) \cdot \sgn_{\sigma}(R_j) %\\
		           %  & =   (-1)^{j+1}\cdot \sgn_{\sigma}(R_1)\cdot \sgn_{\sigma}(R_j)
                = D'(c_1)D'(c_j). \qedhere
	\end{align*}
\end{proof}

%\noindent Unfortunately, we cannot omit the assumption that $D$ is an acyclic digraph.

%\needspace{6\baselineskip}

\vspace*{-\baselineskip} %Remove the line space created by the tilde below
\begin{wrapfigure}{r}{6cm}
\vspace{\baselineskip}
\begin{centering}~~%move the picture slightly to the right
\includegraphics{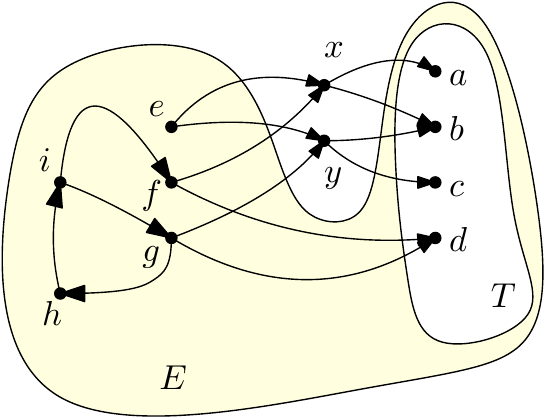}
\end{centering}%
\vspace*{-\baselineskip} %make the picture more tightly cropped
\end{wrapfigure}
~ %The tilde creates a new dummy paragraph. WHY IS THAT NEEDED? -> would increase the space %
  % before the ex. environment. THE NEXT FREE LINE IS ESSENTIAL!
 
\begin{example}
	We consider the digraph \linebreak $D=(V,A)$ with the implicitly ordered vertex set $V=\dSET{a,b,c,d,e,f,g,h,i,x,y}$, 
	and $A$ as depicted on the right. Let $T=\SET{a,b,c,d}$.
	Clearly, $\Wbf(D)$ contains the cycle walk $ghig$. Let $(\sigma,\ll)$ be the heavy arc signature of $D$ where
	$\sigma(a) = 1$ for all $a\in A$, and where $a_1 \ll a_2$ if the tuple $a_1$ is less than the tuple $a_2$ with respect to the
	lexicographic order on $V\times V$ derived from the implicit order of the vertex set.
	Let $C_1 = \SET{f,g,i}$, $C_2 = \SET{d,e,f,i}$, $C_{f} = \SET{d,e,g,i}$. Clearly $C_1,C_2,C_{f}\in \Ccal(\Gamma(D,T,E))$.
	The following routings are $\lll$-maximal among all routings in $D$ with the same set of initial vertices and with targets in $T$.
  \allowdisplaybreaks
	\begin{align*}
		R_{\SET{f,g}} & = \SET{fxb, gyc}  & \sgn_{\sigma}\left( R_{\SET{f,g}} \right) & = +1\\
		R_{\SET{f,i}} & = \SET{fxb, igyc} & \sgn_{\sigma}\left( R_{\SET{f,i}} \right) & = +1\\
		R_{\SET{g,i}} & = \SET{gyc, ifxb} & \sgn_{\sigma}\left( R_{\SET{g,i}} \right) & = -1\\
		R_{\SET{d,e,f}} & = \SET{d,eyc,fxb} & \sgn_{\sigma}\left( R_{\SET{d,e,f}} \right) & = -1\\
		R_{\SET{d,e,i}} & = \SET{d,exb,igyc} & \sgn_{\sigma}\left( R_{\SET{d,e,i}} \right) & = +1\\
		R_{\SET{d,f,i}} & = \SET{d,fxb,igyc} & \sgn_{\sigma}\left( R_{\SET{d,f,i}} \right) & = +1\\
		R_{\SET{e,f,i}} & = \SET{exb,fd,igyc} & \sgn_{\sigma}\left( R_{\SET{e,f,i}} \right) & = -1\\
		R_{\SET{d,e,g}} & = \SET{d,eyc,ghifxb} & \sgn_{\sigma}\left( R_{\SET{d,e,g}} \right) & = -1\\
		R_{\SET{d,g,i}} & = \SET{d,gyc,ifxb} & \sgn_{\sigma}\left( R_{\SET{d,g,i}} \right) & = -1\\
		R_{\SET{e,g,i}} & = \SET{exb,gyc,ifd} & \sgn_{\sigma}\left( R_{\SET{e,g,i}} \right) & = +1\\
	\end{align*}
	Now let us calculate the signatures of $C_1$, $C_2$, and $C_f$ according to Definition~\ref{def:Csigmac}.
	We obtain \(
		\left( C_{1} \right)^{(1)}_{(\sigma,\ll)} = \SET{f,g,-i}
		\), \(
		\left( C_{2} \right)^{(1)}_{(\sigma,\ll)} = \SET{d,e,-f,-i}
		,\) and  
		\( \left( C_{f} \right)^{(1)}_{(\sigma,\ll)} = \SET{-d,-e,-g,-i}.
		\)
	This clearly violates the oriented strong circuit elimination (\cite{BlVSWZ99}, Thm.~3.2.5): if we eliminate $f$ from $\left( C_{1} \right)^{(1)}_{(\sigma,\ll)}$ 
	and $\left( C_{2} \right)^{(1)}_{(\sigma,\ll)}$, then the resulting signed circuit must have opposite signs for $d$ and $i$,
	but $d$ and $i$ have the same sign with respect to $\left( C_{f} \right)^{(1)}_{(\sigma,\ll)}$. Therefore we see that
	the assumption, that $D$ is
	acyclic, cannot be dropped from Lemma~\ref{lem:acyclicOrientation}.
\end{example}

\section{Dealing with Cycles in Digraphs}

\noindent We can still use the construction involved in Lemma~\ref{lem:acyclicOrientation} to obtain an orientation 
for a representation $(D,T,E)$ of a gammoid $M$ where $D$ is not acyclic,
but we first have to construct something we call complete lifting of $D$,
which yields an acyclic representation of a co-extension $M'$ of $M$. Finally, we may obtain an orientation of $M$
by contraction of a heavy arc orientation of $M'$.

\begin{definition}\PRFR{Feb 15th}
  Let $D=(V,A)$ be a digraph, $x,t\notin V$ be distinct new elements, and let $c=(c_i)_{i=1}^n\in \Wbf(D)$ be a cycle in $D$.
  The \deftext[lifting of c in D@lifting of $c$ in $D$]{lifting of $\bm c$ in $\bm D$ by $\bm(\bm x\bm,\bm t\bm)$} is the digraph
  $D^{(c)}_{(x,t)} = (V\disunion\SET{x,t}, A')$ where
  \( A' = A \BSET{(c_1,c_2)} \cup \SET{(c_1,t),(x,c_2),(x,t)}. \qedhere\)
\end{definition}

\noindent 
\begin{tabular}{p{6.5cm}p{6.5cm}}
~~Observe that the cycle $c\in\Wbf(D)$ is no longer a walk  with respect to  the lifting of $c$ in $D$ anymore.
&
%  Consider $D=(\SET{c_1,c_2,c_3,c_4},\SET{(c_1,c_2),(c_2,c_3),(c_3,c_4),(c_4,c_1)})$. Then
%  $c_1c_2c_3c_4c_1\in\Wbf(D)$ is a cycle. The lifting of $c$ in $D$ by $(x,t)$ is then defined to be
%  the digraph $D'=(\SET{c_1,c_2,c_3,c_4,x,t},\SET{(c_1,t),(c_2,c_3),(c_3,c_4),(c_4,c_1),(x,c_2),(x,t)})$. \\[5mm]
%  \hspace*{4cm}
  \includegraphics[valign=t,scale=.85]{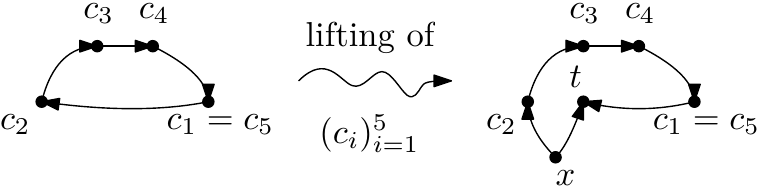} 
\end{tabular}

Clearly, if $c'=(c'_i)_{i=1}^n\in \Wbf(D')$ is a cycle walk in the lifting $D'$ of the cycle $c$ of $D$,
then $c'\in \Wbf(D')$, too. Thus lifting of cycles strictly decreases the number of cycles in the digraph.

\begin{definition}\label{def:completeLifting}\PRFR{Feb 15th}
  Let $D=(V,A)$ be a digraph. A \deftext[complete lifting of D@complete lifting of $D$]{complete lifting of $\bm D$}
  is an acyclic digraph $D'=(V',A')$ for which there is a suitable $n\in \N$ such that there is a
  set $X=\dSET{x_1,t_1,x_2,t_2,\ldots,x_n,t_n}$ with $X\cap V = \emptyset$,
  a family of digraphs $D^{(i)} = (V^{(i)},A^{(i)})$ for $i\in \SET{0,1,\ldots,n}$
  where $D' = D^{(n)}$, $D^{(0)} = D$, and for all $i\in\SET{1,2,\ldots,n}$
   $$D^{(i)} = \left( D^{(i-1)}\right)^{(c_i)}_{(x_i,t_i)}$$
  with respect to a cycle walk $c_i\in \Wbf\left( D^{(i-1)}\right)$.
%  In this case, we say that  $R = \SET{(x_i,t_i)\mid i\in\SET{1,2,\ldots,n}}$ \deftextX{realizes} 
%  the complete lifting $D'$ of $D$.
\end{definition}

\begin{lemma}\label{lem:completelifting}\PRFR{Feb 15th}
  Let $D=(V,A)$ be a digraph. Then $D$ has a complete lifting.
\end{lemma}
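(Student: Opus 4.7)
The plan is to induct on the number of cycles of $D$. A cycle in $D$ is a walk $c_1\ldots c_n$ with $c_1=c_n$ and $c_1\ldots c_{n-1}$ a path, so $n\leq |V|+1$; in particular $D$ has only finitely many cycles, and we let $N(D)$ denote their number. The base case $N(D)=0$ is immediate: $D$ is already acyclic, and the trivial sequence with $n=0$ in Definition~\ref{def:completeLifting} (consisting only of $D^{(0)}=D$) witnesses $D$ as a complete lifting of itself.

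For the inductive step, suppose $N(D)>0$ and the result holds for all digraphs with strictly fewer cycles. Pick any cycle $c\in\Wbf(D)$, choose fresh vertices $x_1,t_1\notin V$, and form $D^{(1)}=D^{(c)}_{(x_1,t_1)}$. The remark immediately preceding the lemma already asserts $N(D^{(1)})<N(D)$; to verify it, one observes that any cycle walk in $D^{(1)}$ can use neither $t_1$ (which has no outgoing arc in $D^{(1)}$) nor $x_1$ (which has no incoming arc in $D^{(1)}$), hence uses only arcs of $A\setminus\SET{(c_1,c_2)}$ and is therefore also a cycle in $D$, while the chosen $c$ itself is no longer a walk in $D^{(1)}$ since its first arc has been deleted. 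The induction hypothesis, applied to $D^{(1)}$, supplies a sequence $D^{(1)}=\tilde D^{(0)},\tilde D^{(1)},\ldots,\tilde D^{(m)}$ of liftings along cycle walks $\tilde c_i\in\Wbf(\tilde D^{(i-1)})$ with fresh vertex pairs $(\tilde x_i,\tilde t_i)$, culminating in an acyclic $\tilde D^{(m)}$. Prepending our initial step yields the sequence $D=D^{(0)},D^{(1)},D^{(2)},\ldots,D^{(m+1)}=\tilde D^{(m)}$ together with vertex pairs $(x_1,t_1),(\tilde x_1,\tilde t_1),\ldots,(\tilde x_m,\tilde t_m)$, which satisfies Definition~\ref{def:completeLifting}.

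The one small technical point to watch is that the fresh vertex pairs introduced along the chain must remain pairwise distinct and disjoint from $V$; this is just a matter of choosing names in order, since at each step one has unrestricted freedom to pick two new elements outside the current vertex set. I do not foresee a genuine obstacle, as the argument rests entirely on the finiteness of the set of cycles of $D$ and on the strict monotonicity $N(D^{(i)})<N(D^{(i-1)})$ recorded in the paragraph preceding the lemma.
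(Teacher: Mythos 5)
Your proof is correct and follows essentially the same route as the paper: induction on the (finite) number of cycle walks, using the fact that lifting an arbitrary cycle strictly decreases that number and that an acyclic digraph is its own complete lifting. The paper states this in one line, while you additionally verify the strict decrease (no cycle can pass through $x_1$ or $t_1$, and $c$ is destroyed) — a detail the paper delegates to the remark preceding the lemma.
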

\begin{proof}\PRFR{Feb 15th}
  By induction on the number of cycle walks in $D$, lifting an arbitrarily chosen cycle walk yields
  a digraph with strictly less cycles, and every acyclic digraph is its own complete lifting.
\end{proof}

\needspace{3\baselineskip}
\begin{lemma}\label{lem:cyclelifting}\PRFR{Feb 15th}
  Let $D=(V,A)$, $E,T\subseteq V$, $c\in \Wbf(D)$ a cycle, $x,t\notin V$, and let $D'=D^{(c)}_{(x,t)}$ be the lifting of $c$ in $D$.
  Then $\Gamma(D,T,E) = \Gamma(D',T\cup\SET{t},E\cup\SET{x})\contract E$.
\end{lemma}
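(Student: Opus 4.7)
The plan is to deduce the claim from the Fundamental Theorem (Theorem~\ref{thm:fundamental}) via a pivot on the arc $(x,t)$ of $D'$. First I unwind the notation: $\contract E$ denotes contraction \emph{to} $E$, and since the ground set of $\Gamma(D',T\cup\SET{t},E\cup\SET{x})$ is $E\cup\SET{x}$, this operation contracts exactly the single element $x$.

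Observe that $t$ is a sink in $D'$, as a fresh vertex whose only incident arcs $(c_1,t)$ and $(x,t)$ point into it, and $x\notin T\cup\SET{t}$. Hence Theorem~\ref{thm:fundamental} applies with $s=t$ and $r=x$, giving
\[ \Gamma(D',T\cup\SET{t},E\cup\SET{x}) = \Gamma\bigl(D'_{x\leftarrow t},\,T\cup\SET{x},\,E\cup\SET{x}\bigr). \]
A direct computation from Definition~\ref{def:rspivot} shows that $D'_{x\leftarrow t}$ has arc set $A\BS\SET{(c_1,c_2)}\cup\SET{(c_1,t),(t,c_2)}$, and that $x$ becomes isolated. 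Because $x\in (T\cup\SET{x})\cap(E\cup\SET{x})$, the identity stated immediately after Theorem~\ref{thm:fundamental} (with its $t$ replaced by $x$) yields
\[ \Gamma\bigl(D'_{x\leftarrow t},T\cup\SET{x},E\cup\SET{x}\bigr)\contract E = \Gamma(D'_{x\leftarrow t},T,E). \]

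It remains to identify $\Gamma(D'_{x\leftarrow t},T,E) = \Gamma(D,T,E)$. Since $x$ is isolated in $D'_{x\leftarrow t}$ and lies outside $T\cup E$, it is irrelevant to routings; the remaining digraph differs from $D$ only in that the arc $(c_1,c_2)$ is subdivided into the length-two detour $c_1\to t\to c_2$ through an internal vertex $t\notin T\cup E$. Because $(t,c_2)$ is the unique arc leaving $t$, and a routing to $T$ cannot terminate at $t\notin T$, every path through $t$ in $D'_{x\leftarrow t}$ must use both $(c_1,t)$ and $(t,c_2)$ consecutively; conversely, inserting $t$ between $c_1$ and $c_2$ in any $D$-path yields a legal simple path, since $t$ is new. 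This bijection between paths preserves start and end vertices as well as pairwise vertex-disjointness (at most one path in any routing can visit $t$, since otherwise the two pre-images would share $c_1$ and $c_2$), so for every $Y\subseteq E$ the routings $Y\routesto T$ in $D$ correspond to routings $Y\routesto T$ in $D'_{x\leftarrow t}$, which gives the desired equality.

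The main obstacle is this final identification: one must confirm that subdividing an arc by a fresh internal vertex outside $T\cup E$ does not alter the gammoid. This reduces to the structural observation that such a vertex cannot start or end a path in a routing and is forced to be traversed in exactly one direction, so the induced correspondence of paths is both well-defined and vertex-disjointness preserving.
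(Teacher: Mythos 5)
Your proposal is correct and takes essentially the same route as the paper's own proof: both pivot on the arc $(x,t)$ (forming $D'_{x\leftarrow t}$) so that $x$ becomes a target that can be contracted away, and then verify the routing correspondence coming from the subdivision of $(c_1,c_2)$ into $c_1\to t\to c_2$, your version merely doing the bookkeeping directly on the ground set $E\cup\SET{x}$ where the paper passes through the strict gammoids on the full vertex set and restricts to $E$ at the end. (One trivial slip: the contraction identity you invoke is stated immediately after Definition~\ref{def:gammoid}, not after Theorem~\ref{thm:fundamental}.)
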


\begin{figure}[b]
\caption{\label{fig:liftingcycles}Constructions involved in Lemma~\ref{lem:cyclelifting}.}
\begin{center}
\includegraphics{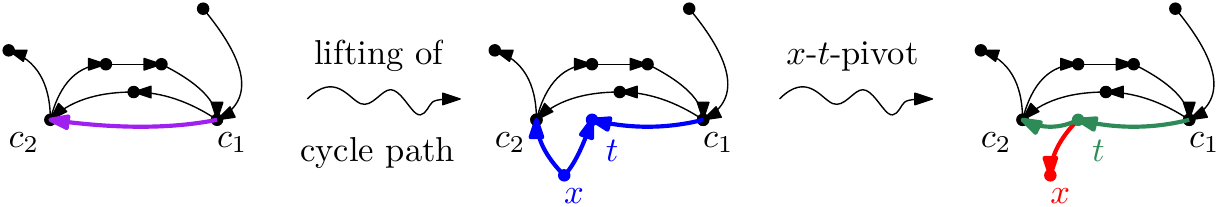}
\end{center}
\end{figure}

\begin{proof}\PRFR{Feb 15th}
  Let $M=\Gamma(D,T,V)$ be the strict gammoid induced by the representation $(D,T,E)$ 
  of the %not necessarily strict 
  gammoid $\Gamma(D,T,E)$, and let 
  $M' = \Gamma(D',T\cup\SET{t},V')$ be the strict gammoid obtained from the lifting of $c$.
  Then $M'' = M' \contract V\cup\SET{t} $ is a strict gammoid that is represented by $(D'',T,V\cup\SET{t})$
  where the digraph
   $D'' = (V_0\BSET{x},A'')$ with $A'' = A_0\BS\left( V_0\times\SET{x} \right)$
  and where % the $x$-$t$-pivot $D_0$ of $D'$, i.e.
   $D'_{x\leftarrow t} = (V_0,A_0)$.
   % This follows from the proof of Lemma~\ref{lem:contractionStrictGammoid}
  % along with the single-arc routing $\SET{xt}\colon \SET{x}\routesto T\cup\SET{t}$ in $D'$.
  %
  It is easy to see from the involved constructions (Fig.~\ref{fig:liftingcycles}), 
  that $A'' = \left(  A\BSET{(c_1,c_2)} \right) \cup\SET{(c_1, t), (t, c_2)}$.
   A routing $R$ in $D$ can have at most one path $p\in R$ such that $(c_1,c_2)\in \left| p \right|_A$, 
   and since $t\notin V$, we obtain a routing $R'= \left(R\BSET{p}\right)\cup\SET{q t r}$ 
   for $q,r\in \Pbf(D)$ such that 
  $p=qr$  with $q_{-1}=c_1$ and $r_1=c_2$.
   Clearly, $R'$ routes $X$ to $Y$ in $D''$ whenever $R$ routes $X$ to $Y$ in $D$.
  Conversely, let $R'\colon X'\routesto Y'$ be a routing in $D''$ with $t\notin X'$.
   Then there is at most one $p\in R'$ with $t\in \left| p \right|$.
    We can invert the construction and let $R''= \left( R'\BSET{p}  \right)\cup\SET{qr}$
     for the appropriate paths $q,r\in \Pbf(D'')$ with $p=qtr$. 
  Then $R''$ is a routing from $X'$ to $Y'$ in $D'$. Thus $M''\restrict V = M$. Consequently,
  \begin{align*}
  \Gamma(D,T,E) = M\restrict E = \left( M''  \right)\restrict E & = \left( \Gamma(D',T\cup\SET{t},V') \contract \left( V\cup\SET{t} \right) \right) \restrict E \\& = \Gamma(D',T\cup\SET{t},E\cup\SET{x})\contract E. \qedhere
\end{align*}
\end{proof}

\begin{corollary}\label{cor:acyclicQuasiRepresentationOfGammoids}\PRFR{Feb 15th}
  Let $M=(E,\Ical)$ be a gammoid. Then there is an acyclic digraph $D=(V,A)$ and sets $T,E'\subseteq V$ such that
  $M = \Gamma \left( D,T,E' \right)\contract E$
  and $\left| T \right| = \rk_M(E) + \left| E'\BS E \right|$.
\end{corollary}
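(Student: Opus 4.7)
The plan is to start from an arbitrary gammoid representation $(D_0,T_0,E)$ of $M$, pass to an acyclic digraph using the complete-lifting procedure of Lemma~\ref{lem:completelifting}, and transfer the gammoid identity to that acyclic setting by iterating the single-step identity from Lemma~\ref{lem:cyclelifting}. The cardinality count for $|T|$ forces one preliminary reduction, which is the main subtle point.

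First I would reduce to the case $|T_0|=\rk_M(E)$. Since $M$ is a gammoid, some representation $(D_0,T_0,E)$ exists; any target in $T_0$ that fails to appear as an endpoint in some maximum linking from a subset of $E$ is redundant, and pruning such targets one by one does not alter the family of independent sets. Equivalently, one can invoke the matrix representability recalled in the proof of Lemma~\ref{lem:gammoidOrientable} (where a matrix of size $|E|\times \rk_M(E)$ already appears) and back-translate to a gammoid representation with $|T_0|=\rk_M(E)$.

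Next, Lemma~\ref{lem:completelifting} provides a complete lifting $D$ of $D_0$; by Definition~\ref{def:completeLifting} this is produced through $n$ successive single-cycle liftings that introduce pairwise distinct fresh vertices $x_1,t_1,\ldots,x_n,t_n$. Writing $D^{(0)}:=D_0$, $D^{(n)}:=D$, and setting $T^{(i)}:=T_0\cup\{t_1,\ldots,t_i\}$, $E^{(i)}:=E\cup\{x_1,\ldots,x_i\}$, Lemma~\ref{lem:cyclelifting} applied at step $i$ yields
\[
  \Gamma\bigl(D^{(i-1)},T^{(i-1)},E^{(i-1)}\bigr)
  \;=\;\Gamma\bigl(D^{(i)},T^{(i)},E^{(i)}\bigr)\contract E^{(i-1)}.
\]
Composing these $n$ identities and using that $(N\contract X)\contract Y=N\contract Y$ whenever $Y\subseteq X\subseteq E(N)$ (both operations contract exactly the same elements of $N$), I would conclude $M=\Gamma(D,T,E')\contract E$ with $T:=T^{(n)}$ and $E':=E^{(n)}$.

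The cardinality check is then immediate: $D$ is acyclic, $E'\setminus E=\{x_1,\ldots,x_n\}$ has cardinality $n$, and $|T|=|T_0|+n=\rk_M(E)+|E'\setminus E|$. The main obstacle is the preliminary pruning step: although it is a standard fact that every gammoid admits a representation with $|T_0|=\rk_M(E)$, the excerpt does not develop it explicitly, so in a fully self-contained write-up this would need to be spelled out as a small auxiliary lemma (via a Hall-type argument on the auxiliary bipartite reachability graph of $D_0$) or imported by reference. Everything else is a mechanical iteration of the two preceding lemmas.
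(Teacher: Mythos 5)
Your overall route is the paper's route: normalize the representation so that the target set has exactly $\rk_M(E)$ elements, pass to a complete lifting via Lemma~\ref{lem:completelifting}, transfer the identity one lifting step at a time with Lemma~\ref{lem:cyclelifting}, collapse the nested contractions using $(N\contract X)\contract Y=N\contract Y$ for $Y\subseteq X$, and count. That iteration and the cardinality bookkeeping are exactly what the paper does, and they are correct as you wrote them.

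The divergence is the preliminary normalization to $|T_0|=\rk_M(E)$, and neither of the justifications you sketch for it holds up. The pruning test you state --- discard a target that is not the endpoint of any maximum linking --- is insufficient: take $E=\{e\}$, $T_0=\{t_1,t_2\}$ and arcs $(e,t_1),(e,t_2)$; both targets end some maximum linking, so your test removes nothing, yet $|T_0|=2>1=\rk_M(E)$. The ``equivalent'' appeal to the matrix appearing in the proof of Lemma~\ref{lem:gammoidOrientable} is not available either: the Lindstr\"om lemma translates digraphs into matrices, not conversely, and there is no generic way to back-translate a real matrix into a digraph representation (real-representable matroids need not be gammoids), so citing the $|E|\times\rk_M(E)$ matrix does not produce the representation whose existence is precisely what is needed here. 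You rightly flag this as an auxiliary step; the paper closes it with a one-line construction you can adopt verbatim: adjoin $\rk_M(E)$ fresh vertices as the \emph{new} target set and add an arc from every old target to every new target. A routing of an independent set onto the old targets extends by one arc per path into pairwise distinct new targets (possible since the routing has at most $\rk_M(E)$ paths and the new vertices lie on no old path), and conversely in any routing onto the new targets the last arc of each path leaves an old target, so dropping it yields a routing onto the old targets; hence the gammoid is unchanged and the target set has size exactly $\rk_M(E)$. With that substitution your argument coincides with the paper's proof.
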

\begin{proof}\PRFR{Feb 15th}
  Let $M=\Gamma(D',T',E)$ with $\left| T' \right|=\rk_M(E)$ --- such a representation may be obtained from any other representation by
  adding the appropriate amount of new targets to the digraph, and connecting every new target with every element from the old target set.
  Then let $D$ be a complete lifting of $D'$ (Lemma~\ref{lem:completelifting}),
  and let $D^{(0)},D^{(1)},\ldots, D^{(n)}$ be the family of digraphs and $c_1,c_2,\ldots,c_n$ be the cycle walks that correspond to 
  the complete lifting $D$ of $D'$ 
  as required by Definition~\ref{def:completeLifting},
  and let $\dSET{x_1,t_1,\ldots,x_n,t_n}$ denote the new elements such that
  $ D^{(i)} = \left( D^{(i-1)}  \right)^{(c_i)}_{(x_i,t_i)} $
  holds for all $i\in\SET{1,2,\ldots,n}$.
  Induction on the index $i$ with Lemma~\ref{lem:cyclelifting} yields that
  $ \Gamma(D',T,E) = \Gamma(D^{(i)},T\cup\SET{t_1,t_2,\ldots,t_i},E\cup\SET{x_1,x_2,\ldots,x_i})\contract E$
  holds for all $i\in\SET{1,2,\ldots,n}$.
  Clearly,
  $ \left| T\cup\SET{t_1,t_2,\ldots,t_n} \right| = \left| T \right| + n = \rk_M(E) + n = \rk_M(E) + \left| \SET{x_1,x_2,\ldots,x_n} \right| $.
\end{proof}

Since the contraction $\Ocal \contract X$ (\cite{BlVSWZ99}, Prop.~3.3.2) of an oriented matroid $\Ocal$ is an orientation of the contraction $M(\Ocal)\contract X$ of its
underlying matroid, we are able to obtain heavy arc orientations of 
gammoids that cannot be represented without cycles in their digraphs through
complete lifting.

Since every heavy arc orientation of a gammoid is representable, an open question that occurs naturally is, whether there is a similar
combinatorial way that also yields non-representable orientations of gammoids. Furthermore, is there a way to refine the definition of
 heavy arc orientations that allows to circumvent the formation of a complete lifting?

% ACKNOWLEDGEMENTS

\bigskip
\footnotesize
\noindent\textit{Acknowledgments.}
This research was partly supported by a scholarship granted by the FernUniversität in Hagen.

% BIBLIOGRAPHY

\bibliographystyle{plain}

\bibliography{references.bib}

\begin{thebibliography}{1}

\bibitem{BJG09}
Jørgen Bang-Jensen and Gregory Gutin.
\newblock {\em {Digraphs: Theory, Algorithms and Applications}}.
\newblock Springer, London, 2nd edition, 2009.

\bibitem{BlVSWZ99}
A.~Björner, M.~LasVergnas, B.~Sturmfels, N.~White, and G.~Ziegler.
\newblock {\em Oriented Matroids}.
\newblock Cambridge University Press, second edition, 1999.

\bibitem{GV89p}
Ira~M. Gessel and X.~G. Viennot.
\newblock {Determinants, Paths, and Plane Partitions}, 1989.

\bibitem{Ingleton1971}
A.~W. Ingleton.
\newblock {A Geometrical Characterization of Transversal Independence
  Structures}.
\newblock {\em Bulletin of the London Mathematical Society}, 3(1):47--51, 1971.

\bibitem{IP73}
A.W. Ingleton and M.J. Piff.
\newblock Gammoids and transversal matroids.
\newblock {\em Journal of Combinatorial Theory, Series B}, 15(1):51--68, 1973.

\bibitem{Li73}
Bernt Lindström.
\newblock On the vector representations of induced matroids.
\newblock {\em Bull. London Math. Soc}, (5), 1973.

\bibitem{M72}
J.H. Mason.
\newblock On a class of matroids arising from paths in graphs.
\newblock {\em Proceedings of the London Mathematical Society}, 3(1):55--74,
  1972.

\bibitem{Ox11}
James Oxley.
\newblock {\em Matroid theory}, volume~21 of {\em Oxford Graduate Texts in
  Mathematics}.
\newblock Oxford University Press, Oxford, second edition, 2011.

\end{thebibliography}

\end{document}